\newcommand\verB[1]{#1}
\renewcommand{\R}{\mathbb R}
\renewcommand{|}{\vert}
\DeclareMathOperator*{\argmin}{arg\,min}
\theoremstyle{thmstyleone}%
\newtheorem{theorem}{Theorem}
\theoremstyle{thmstyletwo}%
\newtheorem{remark}{Remark}%
\newtheorem{scenario}{Scenario}
\theoremstyle{thmstylethree}%
\begin{document}

\title[MAD: A Meta-Learning Based ROM for Solving Parametric PDEs]{Meta-Auto-Decoder: A Meta-Learning Based Reduced Order Model for Solving Parametric Partial Differential Equations}


\author[1]{\fnm{Zhanhong} \sur{Ye}}\email{yezhanhong@pku.edu.cn}
\equalcont{These authors contributed equally to this work.}

\author[2]{\fnm{Xiang} \sur{Huang}}\email{sahx@mail.ustc.edu.cn}
\equalcont{These authors contributed equally to this work.}

\author[3]{\fnm{Hongsheng} \sur{Liu}}\email{liuhongsheng4@huawei.com}
\author*[1,4]{\fnm{Bin} \sur{Dong}}\email{dongbin@math.pku.edu.cn}

\affil*[1]{\orgdiv{Beijing International Center for Mathematical Research}, \orgname{Peking University}, \orgaddress{\city{Beijing}, \country{China}}}

\affil[2]{\orgdiv{School of Computer Science and Technology}, \orgname{University of Science and Technology of China}, \orgaddress{\city{Hefei}, \country{China}}}

\affil[3]{\orgdiv{Central Software Institute}, \orgname{Huawei Technologies Co. Ltd}, \orgaddress{\city{Hangzhou}, \country{China}}}

\affil[4]{\orgdiv{Center for Machine Learning Research}, \orgname{Peking University}, \orgaddress{\city{Beijing}, \country{China}}}


\abstract{
Many important problems in science and engineering require solving the so-called parametric partial differential equations (PDEs), i.e., PDEs with different physical parameters, boundary conditions, shapes of computational domains, etc. 
Typical reduced order modeling techniques accelarate solution of the parametric PDEs by projecting them onto a linear trial manifold constructed in the offline stage. 
These methods often need a predefined mesh as well as a series of precomputed solution snapshots, and
may struggle to balance between efficiency and accuracy due to the limitation of the linear ansatz. 
Utilizing the nonlinear representation of neural networks, 
we propose Meta-Auto-Decoder (MAD) to construct a nonlinear trial manifold, 
whose best possible performance is measured theoretically by the decoder width. 
Based on the meta-learning concept, the trial manifold can be learned in a mesh-free and unsupervised way during the pre-training stage. 
Fast adaptation to new (possibly heterogeneous) PDE parameters is enabled by searching on this trial manifold, and optionally fine-tuning the trial manifold at the same time. 
Extensive numerical experiments show that the MAD method exhibits faster convergence speed without losing accuracy than other deep learning-based methods.
}

\keywords{parametric partial differential equations, meta-learning, reduced order modeling, neural-networks, auto-decoder}


\pacs[MSC Classification]{68T07}

\maketitle

\section{Introduction}\label{sec:intro}

\noindent
Many important problems in science and engineering, such as inverse problems, control and optimization, risk assessment, and uncertainty quantification~\cite{cohen2015approximation,khoo2021solving}, require solving the so-called parametric partial differential equations (PDEs), i.e., PDEs with different physical parameters, boundary conditions, or solution regions.
Mathematically, they require to solve the so-called \textit{parametric} PDEs that can be formulated as:
\begin{equation}\label{def:PDE}
	\mathcal{L}_{\widetilde{\pmb{x}}}^{\gamma_1} u = 0, \ {\widetilde{\pmb{x}}} \in \Omega \subset \R^d,\qquad
	\mathcal{B}_{\widetilde{\pmb{x}}}^{\gamma_2} u = 0, \ {\widetilde{\pmb{x}}} \in \partial \Omega
\end{equation}
where $\mathcal{L}^{\gamma_1}$ and $\mathcal{B}^{\gamma_2}$ are partial differential operators parametrized by $\gamma_1$ and $\gamma_2$, respectively, and ${\widetilde{\pmb{x}}}$ denotes the independent variable in spatiotemporal-dependent PDEs. 
Given $\mathcal{U} = \mathcal{U}(\Omega;\R^{d_u})$ and the space of parameters $\mathcal{A}$, $\eta=(\gamma_1,\gamma_2,\Omega) \in \mathcal{A}$ is the variable parameter of the PDEs and $u \in \mathcal{U}$ is the solution of the PDEs. 
Note that the form of $\eta$ considered here is very general with possible heterogeneity allowed, since the computational domain shape $\Omega$ and the functions defined on this domain or its boundary (which may be involved in $\gamma_1,\gamma_2$) are obviously of different types. 
Solving parametric PDEs requires to construct an infinite-dimensional operator $G:\mathcal{A} \to \mathcal{U}$ that maps any PDE parameter $\eta$ to its corresponding solution $u^\eta$ (i.e., the solution mapping). 

\subsection{Reduced Order Modeling}
Solvers such as Finite Element Methods (FEM)~\cite{zienkiewicz1977finite} and Finite Difference Methods (FDM)~\cite{liszka1980finite} would first discretize the parametric PDEs on a certain mesh, and then solve the corresponding (parametric) algebraic system. 
A fine-resolution mesh is often used to obtain a solution with acceptable accuracy, and the resulting high-dimensional algebraic system would then lead to prohibitive computational costs. 
One common solution to this issue is to replace the original algebraic system by a reduced order model (ROM) without losing much of the physical features~\cite{Quarteroni2015ReducedBM,Benner2017ModelRA}. 
Being relatively lower-dimensional, the ROM can be solved faster, providing a reasonable approximate solution to the parametric PDEs. 

Typical reduced order modeling techniques feature an offline-online solving strategy. 
In the offline stage, a series of the solutions to the original discretized algebraic system (with different parameters) are generated. 
These solutions are used to construct a suitable trial manifold embedded in the high-dimensional space, 
and the ROM is obtained by restricting the original algebraic system to this trial manifold. 
In the online stage, when a new PDE parameter comes, the lower-dimensional ROM for this specific parameter can be easily solved, and an approximate solution is given. 
The offline stage could be time-consuming, but needs to be executed only once. 
After that, the much faster online stage would enable its application to real-time or multi-query scenarios, where solutions corresponding to a large number of PDE parameters need to be solved in a limited amount of time. 
Intuitively speaking, the similarity between the solution snapshots is extracted during the offline stage, informing us of the construction of the trial manifold. 
Utilizing such similarity can reduce the difficulty of the online solving process. 

Validity of the reduced order modeling techniques relies on the assumption 
that the solution set of the parametric PDEs is contained (at least approximately) in a low-dimensional manifold. 
A widespread family of ROM methods further require this manifold to be a linear subspace, and represent the approximate solutions as a linear combination of basis functions. 
These basis functions can be obtained for example by applying proper orthogonal decomposition (POD) to the snapshot matrix. 
However, advection-dominated parametric PDEs such as the wave equation~\cite{Greif2019DecayKN} typically exhibit a slow decaying Kolmogorov $n$-width. 
The dimension of a suitable linear trial manifold for these parametric PDEs often becomes extremely high, even if the true solution manifold is intrinsically low-dimensional. 
Consequently, the efficiency of the online stage to get a solution with acceptable accuracy diminishes. 

Neural networks (NNs) have shown great potential in representing complex nonlinear mappings, and the impact has covered a wide range of fields. 
With this powerful tool at hand, efficient \emph{nonlinear} ROMs can be built to tackle more sophisticated parametric PDEs~\cite{Lee2020ModelRD,Fresca2021AComprehensiveDL,Fresca2022DeepLB}. 
These methods construct the reduced trial manifold by using the decoder mapping of a convolutional autoencoder, whose representation capability is now related to the manifold width~\cite{DeVore1989OptimalNA,Cohen2022OptimalSN} instead of Kolmogorov $n$-width. 
The dimension of such ROMs can often be set equal or nearly equal to the intrinsic dimension of the solution manifold, improving their efficiency. 
In~\cite{Lee2020ModelRD}, the dynamical systems considered are projected onto the nonlinear learned manifold, and the time evolution is computed by minimizing the residual of the equation. 
The reduced dynamics on the nonlinear manifold can be learned by neural networks as well~\cite{Fresca2021AComprehensiveDL}. 
However, these methods still requires discretizing the parametric PDEs on a predefined mesh, restricting their flexibility. 
Furthermore, a lot of high-fidelity solution snapshots have to be precomputed during the offline stage, leading to considerable cost at solving the high-dimensional algebraic equations. 

\subsection{Learning-Based Solvers}
More possibilities of solving parametric PDEs come along with the use of a neural network. 
In recent years, learning-based PDE solvers have become very popular, and it is generally believed that learning-based PDE solvers have the potential to improve efficiency~\cite{raissi2018deep,ChiyuMaxJiang2020MeshfreeFlowNetAP,DmitriiKochkov2021MachineLA}.
The learning-based PDE solvers can be categorized into two categories in terms of the objects that are approximated by neural networks, i.e., the approximation of the solution $u^\eta$ and the approximation of the solution mapping $G$.

\bmhead{NN as a new ansatz of solution}
This kind of approaches represent the approximate solution of the PDEs with a neural network, and a discretization mesh is no longer required. 
Without the discretized algebraic equations, they mainly rely on the governing equations and boundary conditions (or their variants) to train the neural networks. 
For example, Physics-Informed Neural Networks (PINNs)~\cite{raissi2018deep} and Deep Galerkin Method (DGM)~\cite{sirignano2018dgm} constrain the output of deep neural networks to satisfy the given governing equations and boundary conditions. 
Deep Ritz Method (DRM)~\cite{weinan2018deep} exploits the variational form of PDEs and can be used to solve PDEs that can be reformulated as equivalent energy minimization problems. 
Based on a weak formulation of PDEs, Weak Adversarial Network (WAN)~\cite{zang2020weak} parameterizes the weak solution and test functions as primal and adversarial neural networks, respectively. 
These neural approximation methods can work in an unsupervised manner, without the need to generate solution snapshots from conventional computational methods as the labeled data. 
However, all these methods contains no offline stage. They treat different PDE parameters as independent tasks, and need to retrain the neural network from scratch for each PDE parameter. 
When a large number of tasks with different PDE parameters need to be solved, these methods are computationally expensive and impractical.
In order to mitigate retraining cost, E and Yu~\cite{weinan2018deep} recommended a transfer learning method that uses a model trained for one task as the initial model to train another task. 
However, according to our experiments, transfer learning is not always effective in improving convergence speed (see Sec.\ref{sec:burgers}, \ref{sec:laplace}). 

\bmhead{NN as a new ansatz of solution mapping}
This kind of approaches use neural networks to learn the solution mapping between two infinite-dimensional function spaces~\cite{long2018pde,long2019pde,lu2019deeponet, bhattacharya2020model, li2020fourier}.
For example, PDE-Nets~\cite{long2018pde,long2019pde} are among the earliest neural operators that are specifically designed convolutional neural networks inspired by finite difference approximations of PDEs. 
They are able to uncover hidden PDE models from observed dynamical data and perform fast and accurate  predictions at the same time.
Deep Operator Network (DeepONet)~\cite{lu2019deeponet} uses two subnets to encode the parameters and location variables of the PDEs separately, and merge them together to compute the solution.
Fourier Neural Operator (FNO)~\cite{li2020fourier} utilizes fast Fourier transform to build the neural operator architecture and learn the solution mapping between two infinite-dimensional function spaces.
A significant advantage of these approaches is that once the neural network is trained, the online prediction time is almost negligible. 
Although they have demonstrated promising results across a wide range of applications, several issues occur. 
First, the data acquisition cost is prohibitive in complex physical, biological, or engineering systems, and the generalization ability of these models is poor when there is not enough labeled data during the offline training stage~\cite{cai2021physics}.
Second, most of these methods~\cite{long2018pde,long2019pde,bhattacharya2020model,li2020fourier} require a predefined mesh like typical ROMs, and utilize the labeled data on the mesh for training and inference. 
Third, simply applying one forward inference in the online stage may lead to unsatisfactory generalization, especially on out-of-distribution (OOD) settings (i.e., PDE parameters for training and inference are from different probability distributions).
Finally, these operators directly takes the PDE parameter $\eta$ as network input, which would bring inconvenience in network implementation if $\eta$ is heterogeneous.
The recently proposed Physics-Informed DeepONet (PI-DeepONet)~\cite{wang2021learning} can learn a mesh-free solution mapping without any labeled data and retraining.
However, it needs to collect a large number of training samples in the parameter space $\mathcal{A}$ to obtain an acceptable accuracy (see Sec.\ref{sec:burgers}), and is still inflexible dealing with heterogeneous PDE parameters. 

\bmhead{Meta-learning}
Different from conventional machine learning that learns to do a given task, meta-learning learns to improve the learning algorithm itself based on multiple learning episodes over a distribution of related tasks.
As a result, meta-learning can handle new tasks faster and better. 
In this field, the Model-Agnostic Meta-Learning (MAML)~\cite{finn2017model} algorithm and its variants~\cite{antoniou2019train, nichol2018reptile, yoon2018bayesian} have beed widely used. 
These algorithms try to find an initial model with good generalization ability such that it can be adapted to new tasks with a small number of gradient updates.  
For example, MAML~\cite{finn2017model} firstly trains a meta-model with good initialization weight on a variety of learning tasks, which is then fine-tuned on a new task through a few steps of gradient descent to get the target model.
The Reptile~\cite{nichol2018reptile} algorithm eliminates second-order derivatives in MAML by repeatedly sampling a task, training on it, and moving the initialization towards the trained weight on that task.

Borrowing the idea of meta-learning may inspire new ways to solve parametric PDEs. 
Different PDE parameters are viewed as different tasks, and the similarity between them is extracted to form an efficient learning algorithm, enabling fast solving when a new parameter comes. 
This strategy to accelerate parametric PDE solving has much in common with the offline-online formalism of the ROMs. 
There are still differences, since it finds the solution via learning rather than solving algebraic equations, and the precomputed solution snapshots are no longer required in the offline stage. 
To the best of our knowledge, Meta-MgNet~\cite{chen2022meta} is the first work that view solving parametric PDEs as a meta-learning problem, which is based on hypernet and the multigrid algorithm. 
Meta-MgNet utilizes the shared knowledge across tasks to generate good smoothing operators adaptively, and thereby accelerates the solution process, but is not directly applicable to PDEs on which the multigrid algorithm is not available. 
Recently, the Reptile algorithm is also used to accelerate the PDE solving problems~\cite{liu2021novel}.
However, MAML and Reptile are not always effective in improving the convergence speed (see Sec.\ref{sec:maxwell},~\ref{sec:laplace} and~\ref{sec:helmholtz}). 

\subsection{Our Contributions}
We propose Meta-Auto-Decoder (MAD), a mesh-free and unsupervised deep learning method that enables the pre-trained model to be quickly adapted to equation instances by implicitly encoding heterogeneous PDE parameters as latent vectors.
MAD makes use of the similarity between tasks from the perspective of manifold learning, and tries to learn an approximation of the solution manifold during the offline pre-training stage. 
Thanks to the expressive power of neural networks, this approximated solution manifold can be highly nonlinear, and it lies in an \emph{infinite-dimensional} function space, rather than a discretized finite-dimensional space as in conventional ROMs. 
We construct the ansatz of solution as a neural network in the form $u_\theta({\widetilde{\pmb{x}}},\pmb{z})$. 
By taking the spatial (or spatial-temporal) coordinate ${\widetilde{\pmb{x}}}$ directly as the network input, unsupervised training is allowed, and a mesh is no longer required. 
As the additional input $\pmb{z}$ varies, $u_\theta({\widetilde{\pmb{x}}},\pmb{z})$ moves on 
the trial manifold, 
which may be an approximation of the true solution manifold for certain $\theta$. 
The PDE parameter $\eta$ is implicitly encoded into $\pmb{z}$ by applying the auto-decoder architecture motivated by \cite{park2019deepsdf}, regardless of the possible heterogeneity. 
When a new task comes in the online stage, MAD achieves fast transfer by projecting the new task to the manifold and optionally fine-tuning the manifold at the same time.
The main contributions of this paper are summarized as follows: 
\begin{itemize}
\item A mesh-free and unsupervised deep neural network approach is proposed to solve parametric PDEs. Based on the concept of meta-learning, once the neural network is pre-trained, solving a new task involves only a small number of iterations. In addition, the auto-decoder architecture adopted by MAD can realize auto-encoding of heterogeneous PDE parameters.
\item The mathematical intuition behind the MAD method is analyzed from the perspective of manifold learning. In short, a neural network is pre-trained to approximate the solution manifold by a trial manifold, and the required solution is searched on the trial manifold or in a neighborhood of the trial manifold.
\item To quantify the best possible approximation performance of such a trial manifold, we introduce the decoder width, which provides a theoretical tool to analyze the effectiveness of the MAD method. 
\item Extensive numerical experiments are carried out to demonstrate the effectiveness of our method, which show that MAD can significantly improve the convergence speed and has good extrapolation ability for OOD settings.
\end{itemize}
A preliminary version of this work appeared as~\cite{huang2021metaautodecoder}, while we included more extended discussions on the relations between MAD and the reduced order modeling techniques, as well as some theoretical analysis and additional experiments. 

\section{Widths for Quantifying Approximation Accuracy}\label{sec:widths}

Both conventional ROMs and the MAD method aim to find a suitable trial manifold in the offline or pre-training stage. 
A trial manifold of low dimensionality is prefered, as it can simplify parametric PDE solving in the online or fine-tuning stage. 
In the mean time, the set of solutions should be contained in the trial manifold, at least in the approximate sense, so that a decent numerical solution can be obtained from the trial manifold. 
Taking the intrinsic dimension of the solution set into account, 
we have to decide whether it is possible to get a desired accuracy using a trial manifold with certain dimensionality. 
The concept of widths is then introduce to give a quantified criterion for this purpose. 
For different types of trial manifolds, different definitions of widths are considered accordingly. 

\subsection{The Kolmogorov $n$-Width and the Manifold Width}
We first consider the parametric PDEs on a fixed domain $\Omega$, and defer the case of a variable domain to Sec.\ref{sec:widthsVarDom}. 
Let $\mathcal{U}=\mathcal{U}(\Omega;\R^{d_u})$ be a Banach space, and the solution set (or solution manifold) to be approximated be a compact subset $\mathcal{K}=G(\mathcal{A})=\{G(\eta)\mid\eta\in\mathcal{A}\}\subset\mathcal{U}$. 
Many classical reduced order modeling techniques approximate the true solution $u^\eta\in\mathcal{K}$ using a linear combination of basis functions $\hat u=\sum_{i=1}^{n}a_iu_i$, and the underlying trial manifold is in fact linear. 
The best possible approximation of elements of $\mathcal{K}$ using such linear trial manifolds can be measured by the well-known Kolmogorov $n$-width. 
To be more specific, for a given linear subspace $U_n\subset\mathcal{U}$ of dimension $n$, its performance in approximating the elements of $\mathcal{K}$ is evaluated by the worst case error
\begin{equation}
\sup_{u\in\mathcal{K}}d_{\mathcal{U}}(u,U_n) = \sup_{\eta\in\mathcal{A}}\inf_{v\in U_n}\|u^\eta-v\|_{\mathcal{U}} .
\end{equation}
The Kolmogorov $n$-width is then defined by taking the infimum over all possible linear subspaces
\begin{equation}
d_n(\mathcal{K})=\inf_{U_n}\sup_{\eta\in\mathcal{A}}\inf_{v\in U_n}\|u^\eta-v\|_{\mathcal{U}} .
\end{equation}

For certain types of parametric PDEs, such as the diffusion-dominated equations or the elliptic ones~\cite{Cohen2010AnalyticRP,Tran2017AnalysisQO}, the set of solutions $\mathcal{K}$ is known to have a fast decaying Kolmogorov $n$-width as $n$ increases. 
In this case, a linear trial manifold with a reasonable number of dimensions would provide a good approximation of $\mathcal{K}$. 
However, the decay rate is found to be quite slow for advection-dominated parametric PDEs, including the wave equation~\cite{Greif2019DecayKN}. 
This encourages the study of nonlinear trial manifolds and the corresponding nonlinear widths. 

Making use of the nonlinear representation of neural networks, many recent methods~\cite{Lee2020ModelRD,Fresca2021AComprehensiveDL,Fresca2022DeepLB} find the trial manifold by training a convolutional autoencoder. 
The manifold width introduced in~\cite{DeVore1989OptimalNA} is a natural alternative to measure the best possible approximation of this type of trial manifolds. 
Let $Z=\R^n$ be a fixed $n$-dimensional latent space, and $E:\mathcal{U}\to Z$, $D:Z\to\mathcal{U}$ be two continuous mappings, the worst case reconstruction error is
\begin{equation}
\sup_{u\in\mathcal{K}}\|u-D(E(u))\|_{\mathcal{U}}=\sup_{\eta\in\mathcal{A}}\|u^\eta-D(E(u^\eta))\|_{\mathcal{U}} .
\end{equation}
Taking infimum over all possible mapping pairs $(E,D)$ would then give the manifold width
\begin{equation}
d_n^{\mathrm{Mani}}(\mathcal{K})=\inf_{E,D}\sup_{\eta\in\mathcal{A}}\|u^\eta-D(E(u^\eta))\|_{\mathcal{U}} .
\end{equation}
For solution sets of certain elliptic equations, this width may achieve zero even for finite $n$~\cite{Franco2021DeepLA}. 
If we further require these two mappings to be $l$-Lipschitz continuous
, a variant called the stable manifold width~\cite{Cohen2022OptimalSN} is then obtained as%
\footnote{The original definition given in~\cite{Cohen2022OptimalSN} actually takes infimum over all possible norms $\|\cdot\|_Z$ on $Z$ along with $E,D$. Here we choose a fixed norm $\|\cdot\|_Z=\|\cdot\|_2$ for simplicity, which won't make a difference for our purpose. }
\begin{equation}
d_{n,l}^{\mathrm{SMani}}(\mathcal{K})=\inf_{E,D\ l\text{-Lip}}\sup_{\eta\in\mathcal{A}}\|u^\eta-D(E(u^\eta))\|_{\mathcal{U}} .
\end{equation}
Essentially, in these two definitions of nonlinear widths, the solution set $\mathcal{K}$ is approximated by the trial manifold $D(Z)=\{D(\pmb{z})\mid \pmb{z}\in\R^n\}$, along with certain stability requirements imposed by the continuity of the mapping $E$. 

\subsection{The Decoder Width}
Similar to the autoencoders, the MAD method to be introduced in Sec.\ref{sec:methodology} constructs the trial manifold by using a decoder mapping $D:Z\to\mathcal{U}$. 
However, the encoder mapping $E:\mathcal{U}\to Z$ is no longer involved, and we solely learn the mapping $D$ in a mesh-free and unsupervised manner. 
The corresponding notion of width would change accordingly, and we introduce the \emph{decoder width} given as
\begin{equation}\label{eq:decWidth}\begin{split}
	d_{n,l}^\mathrm{Deco}(\mathcal{K})&=\inf_{D\ l\text{-Lip}}\sup_{u\in\mathcal{K}}d_{\mathcal{U}}(u,\{D(\pmb{z})\mid\|\pmb{z}\|\le 1\})
	\\&=\inf_{D\ l\text{-Lip}}\sup_{\eta\in\mathcal{A}}\inf_{\pmb{z}\in Z_B}\|u^\eta-D(\pmb{z})\|_\mathcal{U}
,\end{split}\end{equation}
where the first infimum is taken over all mappings $D:Z\to\mathcal{U}$ that are $l$-Lipschitz continuous, and $Z_B=\{\pmb{z}\in Z\mid\|\pmb{z}\|\le 1\}$ is the closed unit ball of $Z$. 
The constraint $\|\pmb{z}\|\le 1$ and the Lipschitz continuity condition are introduced to avoid highly irregular mappings resembling the space-filling curves. 
The following theorem implies that the decoder width decays at least as fast as the stable manifold width as $n$ increases. 
\begin{theorem}
	Assume the solution set $\mathcal{K}$ has radius $R=\sup_{u\in\mathcal{K}}\|u\|_{\mathcal{U}}<+\infty$. Then we have
	\begin{equation}
d_{n,l^2R}^\mathrm{Deco}(\mathcal{K})\le d_{n,l}^\mathrm{SMani}(\mathcal{K}) .
\end{equation}
\end{theorem}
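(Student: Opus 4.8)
The plan is to take a near-optimal encoder--decoder pair realizing the stable manifold width and convert the decoder alone into an admissible competitor for the decoder width, via a translation and a rescaling of the latent space $Z$. Concretely, fix $\varepsilon>0$ and choose $l$-Lipschitz maps $E:\mathcal{U}\to Z$ and $D:Z\to\mathcal{U}$ with $\sup_{u\in\mathcal{K}}\|u-D(E(u))\|_{\mathcal{U}}\le d_{n,l}^{\mathrm{SMani}}(\mathcal{K})+\varepsilon$. The goal is to produce from these a single $l^2R$-Lipschitz map $\tilde D:Z\to\mathcal{U}$ whose image restricted to the unit ball $Z_B$ approximates every element of $\mathcal{K}$ at least as well.

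The first step is to normalize $E$ at the origin of $\mathcal{U}$. Replacing $E$ by $E'(\cdot)=E(\cdot)-E(0)$ and $D$ by $D'(\cdot)=D(\cdot+E(0))$ leaves both Lipschitz constants equal to $l$ and leaves the reconstruction unchanged, since $D'(E'(u))=D(E(u))$, while now $E'(0)=0$. Then for every $u\in\mathcal{K}$ the Lipschitz bound together with the radius assumption gives $\|E'(u)\|=\|E'(u)-E'(0)\|\le l\|u\|_{\mathcal{U}}\le lR$, so $E'(\mathcal{K})$ is contained in the ball of radius $lR$ about the origin of $Z$.

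Next, define the candidate decoder $\tilde D:Z\to\mathcal{U}$ by $\tilde D(\pmb z)=D'(lR\,\pmb z)$. It is $l^2R$-Lipschitz, being the composition of the $lR$-Lipschitz dilation $\pmb z\mapsto lR\,\pmb z$ with the $l$-Lipschitz map $D'$, hence admissible in the infimum defining $d_{n,l^2R}^{\mathrm{Deco}}(\mathcal{K})$. For any $u\in\mathcal{K}$ set $\pmb z_u=\tfrac{1}{lR}E'(u)$ (or $\pmb z_u=0$ in the degenerate case $lR=0$, where then $E'(u)=0$); by the bound above $\pmb z_u\in Z_B$, and $\tilde D(\pmb z_u)=D'(E'(u))=D(E(u))$. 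Therefore $\inf_{\pmb z\in Z_B}\|u-\tilde D(\pmb z)\|_{\mathcal{U}}\le\|u-D(E(u))\|_{\mathcal{U}}$; taking the supremum over $u\in\mathcal{K}$ gives $d_{n,l^2R}^{\mathrm{Deco}}(\mathcal{K})\le d_{n,l}^{\mathrm{SMani}}(\mathcal{K})+\varepsilon$, and letting $\varepsilon\to0$ concludes.

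The one genuine subtlety, and the step I would be most careful about, is recentering $E$ at $0\in\mathcal{U}$ rather than at some point of $\mathcal{K}$: this is exactly what lets the radius $R=\sup_{u\in\mathcal{K}}\|u\|_{\mathcal{U}}$ control $\|E'(u)\|$ (recentering at a point of $\mathcal{K}$ would only give the diameter bound $2R$), and it is what produces the sharp constant $l^2R$ in the statement. The remaining points are routine bookkeeping: that translation of $(E,D)$ preserves the Lipschitz constants and the reconstruction map, that composition multiplies Lipschitz constants, and that the trivial cases $R=0$ or $l=0$ (where both widths vanish) are handled gracefully by the same formulas.
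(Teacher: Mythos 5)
Your proposal is correct and follows essentially the same route as the paper's proof: both construct the competitor decoder $\pmb{z}\mapsto D(lR\,\pmb{z}+E(\mathbf{0}_{\mathcal{U}}))$, use the Lipschitz bound $\|E(u)-E(\mathbf{0}_{\mathcal{U}})\|\le lR$ to place the rescaled latent code in the unit ball, and observe that the reconstruction $D(E(u))$ is reproduced exactly. Your explicit $\varepsilon$-approximation of the infimum and the remark on degenerate cases are minor tidyings of the same argument.
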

\begin{proof}
	Let $E:\mathcal{U}\to Z$ and $D:Z\to\mathcal{U}$ be two $l$-Lipschitz continuous mappings. 
	We define a new mapping $\bar D:Z\to\mathcal{U}$ by $\bar D(\pmb{z})=D(LR\pmb{z}+E(\mathbf{0}_{\mathcal{U}}))$, where $\mathbf{0}_{\mathcal{U}}$ is the zero vector of $\mathcal{U}$. 
	Then $\bar D$ is $l^2R$-Lipschitz continuous. 
	For given $u\in\mathcal{K}$, $\bar {\pmb{z}}=\frac1{lR}(E(u)-E(\mathbf{0}_{\mathcal{U}}))$ satisfies
	\begin{equation}
		\|\bar {\pmb{z}}\|_2=\frac1{lR}\|E(u)-E(\mathbf{0}_{\mathcal{U}})\|_2
		\le\frac1{R}\|u-\mathbf{0}_{\mathcal{U}}\|_{\mathcal{U}}
		\le 1
	,\end{equation}
	and
	\begin{equation}
		\bar D(\bar {\pmb{z}})=D(lR\bar {\pmb{z}}+E(\mathbf{0}_{\mathcal{U}}))
		=D(E(u)-E(\mathbf{0}_{\mathcal{U}})+E(\mathbf{0}_{\mathcal{U}}))=D(E(u))
	.\end{equation}
	This gives
	\begin{equation}
		\inf_{\pmb{z}\in Z_B}\|u-D(\pmb{z})\|_{\mathcal{U}}
		\le\|u-D(\bar {\pmb{z}})\|_{\mathcal{U}}
		=\|u-E(D(u))\|_{\mathcal{U}}
	.\end{equation}
	Taking supremum over all $u\in\mathcal{K}$, and then infimum over all possible mappings $E,D,\bar D$ would then give
	\begin{equation}
	\begin{split}
		d_{n,l^2R}^\mathrm{Deco}(\mathcal{K})&=\inf_{\bar D\ l^2R\text{-Lip}}\sup_{u\in\mathcal{K}}\inf_{\pmb{z}\in Z_B}\|u-\bar D(\pmb{z})\|_\mathcal{U}
		\\&\le\inf_{E,D\ l\text{-Lip}}\sup_{u\in\mathcal{K}}\|u-D(E(u))\|_{\mathcal{U}}
		\\&=d_{n,l}^{\mathrm{SMani}}(\mathcal{K})
	.\end{split}
	\end{equation}
\end{proof}

\subsection{Dealing with Variable Domains}\label{sec:widthsVarDom}
The shape of the computational domain $\Omega$ may be part of the variable parameter in some parametric PDEs, 
and the corresponding definitions of widths can be introduced by using either a reference domain or a master domain. 

In the first case, we assume there is a \emph{reference domain} $\Omega^\text{ref}$, and every possible computational domain $\Omega$ is associated with a diffeomorphism $T_\Omega:\Omega^\text{ref}\to\Omega$. 
Then for any PDE parameter $\eta=(\gamma_1,\gamma_2,\Omega)\in\mathcal{A}$, the true solution $u^\eta(\widetilde{\pmb{x}})\in\mathcal{U}(\Omega;\R^{d_u})$ can be associated with a function on the reference domain
\begin{equation}
\bar{u}^\eta(\widetilde{\pmb{x}})=u^\eta(T_\Omega(\widetilde{\pmb{x}}))\in\mathcal{U}=\mathcal{U}(\Omega^\text{ref};\R^{d_u}) .
\end{equation}
Taking the modified solution set to be
\begin{equation}
\mathcal{K}=\{\bar{u}^\eta(\widetilde{\pmb{x}})\mid\eta\in\mathcal{A}\}\subset\mathcal{U} ,
\end{equation}
all previous definitions of widths can be applied. 
This reformulation is convenient for classical mesh-based ROMs~\cite{Quarteroni2015ReducedBM}, but it has the prerequisite that no topology changes in $\Omega$ should occur. 

In the second case, we assume instead there is a \emph{master domain} $\Omega^\text{mast}$ containing all possible $\Omega$'s as its subdomain, i.e., $\Omega\subseteq\Omega^\text{mast}$ for any $\eta=(\gamma_1,\gamma_2,\Omega)\in\mathcal{A}$. 
The decoder width is now defined as
\begin{equation}
	d_{n,l}^\mathrm{Deco}(\mathcal{K})=\inf_{D\ l\text{-Lip}}\sup_{\eta=(\gamma_1,\gamma_2,\Omega)\in\mathcal{A}}\inf_{\pmb{z}\in Z_B}
	\Bigl\|u^\eta-D(\pmb{z})\vert_{\Omega}\Bigr\|_{\mathcal{U}(\Omega;\R^{d_u})}
,\end{equation}
where the first infimum is taken over all mappings $D:Z\to\mathcal{U}(\Omega^\text{mast};\R^{d_u})$ that are $l$-Lipschitz continuous. 
Conceptually, it aims to approximate the solution $u^\eta$ by restricting the decoded function $D(\pmb{z})$ on the master domain $\Omega^\text{mast}$ to the subdomain $\Omega\subseteq\Omega^\text{mast}$ on which $u^\eta$ is defined. 
This alternate reformulation is prefered in the context of the MAD method, since we take neural networks as the mesh-free ansatz of PDE solutions, which can be defined for all $\widetilde{\pmb{x}}\in\R^d$. 

\section{Methodology}\label{sec:methodology}

\subsection{Meta-Auto-Decoder}

\begin{figure}
	\centering
	\includegraphics[width=\columnwidth]{./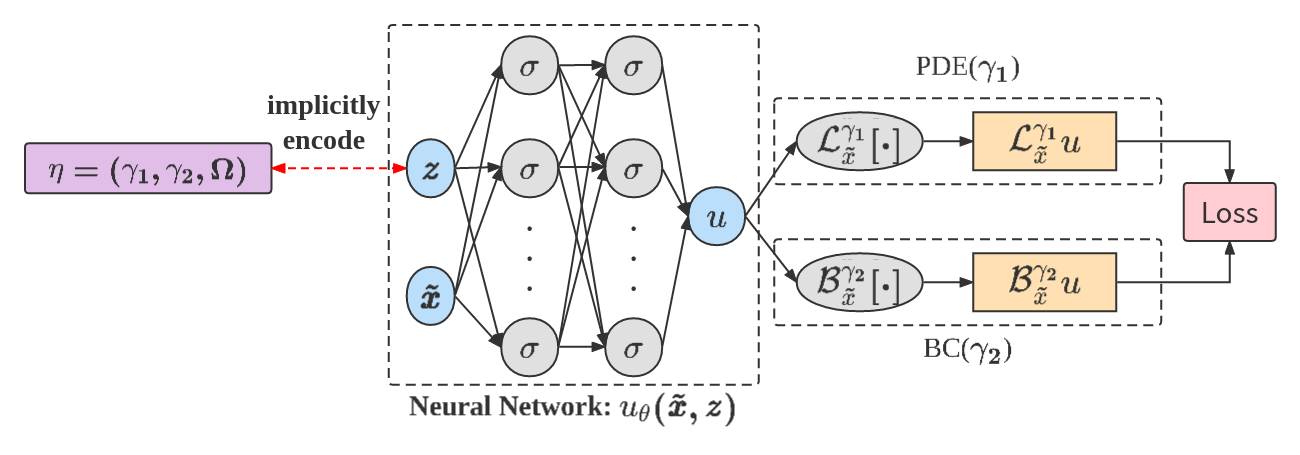}
	\caption{Architecture of Meta-Auto-Decoder.}
	\label{fig:AD_DNN}
\end{figure}

We adopt meta-learning concept to realize fast solution of parametric PDEs. 
Our basic idea is to first learn some universal meta-knowledge from a set of sampled tasks in the pre-training stage, and then solve a new task quickly by combining the task-specific knowledge with the shared meta-knowledge in the fine-tuning stage. 
We also adapt the auto-decoder architecture in~\cite{park2019deepsdf}, 
and introduce $u_\theta({\widetilde{\pmb{x}}},\pmb{z})$ to approximate the solutions of parametric PDEs. 
The architecture of $u_\theta({\widetilde{\pmb{x}}},\pmb{z})$ is shown in Fig.\ref{fig:AD_DNN}.
A physics-informed loss is used for training, making the proposed method unsupervised.
Putting all these together, we propose a new method Meta-Auto-Decoder (MAD) to solve parametric PDEs. 
For the rest of the subsection, the loss function and the two stages of training will be explained in details. 

To enable unsupervised learning, 
given any PDE parameter $\eta\in\mathcal{A}$, the physics-informed loss $L^\eta:\mathcal{U}\to[0,\infty)$ about Eq.\eqref{def:PDE} is taken to be
\begin{equation}\label{eq:PIloss}
	L^\eta[u] = \|\mathcal{L}_{\widetilde{\pmb{x}}}^{\gamma_1} u\|_{L_2(\Omega)}^2 + \lambda_\text{bc}\|\mathcal{B}_{\widetilde{\pmb{x}}}^{\gamma_2} u\|_{L_2(\partial\Omega)}^2
,\end{equation}
where $\lambda_\text{bc}>0$ is a weighting coefficient.
The Monte Carlo estimate of $L^\eta[u]$ is
\begin{equation}\label{eq:MCPIloss}
	\hat L^\eta[u]
	=\frac1{M_\text{r}}\sum_{j=1}^{M_\text{r}}\Bigl\|\mathcal{L}_{\widetilde{\pmb{x}}}^{\gamma_1} u({\widetilde{\pmb{x}}}_j^\text{r})\Bigr\|_2^2 +
	\frac{\lambda_\text{bc}}{M_\text{bc}}\sum_{j=1}^{M_\text{bc}}\Bigl\|\mathcal{B}_{\widetilde{\pmb{x}}}^{\gamma_2} u({\widetilde{\pmb{x}}}_j^\text{bc})\Bigr\|_2^2
,\end{equation}
where $\{{\widetilde{\pmb{x}}}_j^\text{r}\}_{j\in\{1,\dots,M_\text{r}\}}$ and $\{{\widetilde{\pmb{x}}}_j^\text{bc}\}_{j\in\{1,\dots,M_\text{bc}\}}$ are two sets of random points sampled from $\Omega$ and $\partial\Omega$, respectively. 
This task-specific loss $\hat L^\eta[u]$ can be computed by automatic differentiation~\cite{baydin2018automatic},
and will be used in the pre-training stage and the fine-tuing stage. 

In the pre-training stage, through minimizing the loss function, a pre-trained model parametrized by $\theta^*$ is learned for all tasks and each task is paired with its own decoded latent vector $\pmb{z}_i^*$.
Such a pre-trained model is considered as the meta knowledge as it is learned from the distribution of all tasks and the learned latent vector $\pmb{z}_i^*$ is the task-specific knowledge. 
When solving a new task in the fine-tuning stage, we keep the model weight $\theta^*$ fixed and minimize the loss by fine-tuning the latent vector $\pmb{z}$.
Alternatively, we may unfreeze $\theta$ and allow it to be fine-tuned along with $\pmb{z}$. 
These two fine-tuning strategies give rise to different versions of MAD, which are called \textit{MAD-L} and \textit{MAD-LM}, respectively. 
The corresponding problems of pre-training and fine-tuning are formulated as follows:

\bmhead{Pre-training stage}
Given $N$ randomly generated PDE parameters $\eta_1,\dots,\eta_N\in\mathcal{A}$, both \textit{MAD-L} and \textit{MAD-LM} solve the following optimization problem
\begin{equation}\label{eq:MADtr}
	(\{\pmb{z}^*_i\}_{i\in\{1,\dots,N\}},\; \theta^*) = \operatorname*{\arg\min}_{\theta,\{\pmb{z}_i\}_{i\in\{1,\dots,N\}}}\sum_{i=1}^{N}\left(\hat L^{\eta_i}[u_\theta(\cdot,\pmb{z}_i)]+\frac1{\sigma^2}\|\pmb{z}_i\|^2\right),
\end{equation}
where $\theta^*$ is the optimal model weight, $\{\pmb{z}_i^*\}_{i\in\{1,\dots,N\}}$ are the optimal latent vectors for different PDE parameters, and $\hat L^{\eta_i}$ is defined in Eq.\eqref{eq:MCPIloss}.
\verB{See Alg.\ref{alg:pretrain}. }

\bmhead{Fine-tuning stage (\textit{MAD-L})}
Given a new PDE parameter $\eta_{\text{new}}$, 
\textit{MAD-L} keeps $\theta^*$ fixed, and minimizes the following loss function to get
\begin{equation}\label{eq:MADinf}
	\pmb{z}_{\text{new}}^*=\operatorname*{\arg\min}_{\pmb{z}}\hat L^{\eta_{\text{new}}}[u_{\theta^*}(\cdot,\pmb{z})]+\frac1{\sigma^2}\|\pmb{z}\|^2
.\end{equation}
Then $u_{\theta^*}(\cdot,\pmb{z}_{\text{new}}^*)$ is the approximate solution of PDEs with parameter $\eta_{\text{new}}$.
To speed up convergence, we can set the initial value of $\pmb{z}$ to $\pmb{z}_i^*$ obtained during pre-training where $\eta_i$ is the nearest%
\footnote{
For example, if $\mathcal{A}$ is a space of functions, we can discretize a function into a vector and then find the Euclidean distance between the two vectors as the distance between two PDE parameters.}
to $\eta_{\text{new}}$.
\verB{See Alg.\ref{alg:finetune}. }

\bmhead{Fine-tuning stage (\textit{MAD-LM})}
\textit{MAD-LM} fine-tunes the model weight $\theta$ with the latent vector $\pmb{z}$ simultaneously, and solves the following optimization problem
\begin{equation}\label{eq:MADinf_LM}
	(\pmb{z}_{\text{new}}^*,\theta_{\text{new}}^*)=\operatorname*{\arg\min}_{\pmb{z},\theta}\hat L^{\eta_{\text{new}}}[u_{\theta}(\cdot,\pmb{z})]+\frac1{\sigma^2}\|\pmb{z}\|^2
\end{equation}
with initial model weight $\theta^*$. 
This would produce an alternative approximate solution $u_{\theta_{\text{new}}^*}(\cdot,\pmb{z}_{\text{new}}^*)$. 
The latent vector is initialized in the same way as \textit{MAD-L}. 
\verB{See Alg.\ref{alg:finetune}. }
\begin{remark}
	The MAD method has several key advantages compared with existing methods. 
	Besides being mesh-free and unsupervised, it can deal with heterogeneous PDE parameters painlessly, since $\eta$ is not taken as the network input, and is encoded into $\pmb{z}$ in an implicit way. 
	Introduction of the meta-knowledge $\theta^*$ would accelerate the fine-tuning process, which can be better understood in the light of the manifold learning perspective. 
	For \textit{MAD-LM}, the accuracy on OOD tasks is likely to be at least comparable with training from scratch based on PINNs. 
	Although the fine-tuning process of MAD is still slower than one forward inference of a neural network solution mapping, the advantages presented above can make it more suitable for some real applications. 
\end{remark}

\begin{remark}
	If we replace the physics-informed loss by certain supervised loss, the \textit{MAD-L} method would then coincide with the DeepSDF algorithm~\cite{park2019deepsdf}. 
	Despite of this, the field of solving parametric PDEs is quite different from 3D shape representation in computer graphics. 
	Moreover, the introduction of model weight fine-tuning in \textit{MAD-LM} can significantly improve solution accuracy, as is explained intuitively in Sec.\ref{sec:MAD_L},\ref{sec:MAD_LM} and validated by numerical experiments in Sec.\ref{sec:numerical_experiments}. 
\end{remark}

\begin{algorithm}
\caption{Pre-training stage of MAD}\label{alg:pretrain}
\begin{algorithmic}[1]
	\Require PDE parameter samples $\{\eta_i\}_{1\le i\le N}$, learning rates $\alpha_\theta,\alpha_z$
	\State randomly initialize $\theta,\{\pmb{z}_i\}$
	\While{stopping-criterion not met}
		\State $L(\theta,\{\pmb{z}_i\}) \leftarrow \sum_{i=1}^{N}\bigl(\hat L^{\eta_i}[u_\theta(\cdot,\pmb{z}_i)]+\frac1{\sigma^2}\|\pmb{z}_i\|^2\bigr)$
		\State $\theta\leftarrow\theta-\alpha_\theta\nabla_\theta L(\theta,\{z_i\})$
		\State $\pmb{z}_i\leftarrow \pmb{z}_i-\alpha_z\nabla_{\pmb{z}_i} L(\theta,\{\pmb{z}_i\})$
	\EndWhile
	\Ensure $\theta^*,\{\pmb{z}_i^*\}$
\end{algorithmic}
\end{algorithm}
\begin{algorithm}
\caption{Fine-tuning stage of MAD}\label{alg:finetune}
\begin{algorithmic}[1]
	\Require new PDE parameter $\eta_\text{new}$, pre-trained $\theta^*,\{\pmb{z}_i^*\}$, learning rates $\alpha_\theta,\alpha_z$
	\State $i\leftarrow\arg\min_i\|\eta_i-\eta_\text{new}\|$
	, $\pmb{z}\leftarrow \pmb{z}_i^*$, $\theta\leftarrow \theta^*$
	\While{stopping-criterion not met}
		\State $L(\theta,\pmb{z}) \leftarrow \hat L^{\eta_{\text{new}}}[u_{\theta}(\cdot,\pmb{z})]+\frac1{\sigma^2}\|\pmb{z}\|^2$
		\State $\pmb{z}\leftarrow \pmb{z}-\alpha_z\nabla_{\pmb{z}} L(\theta,\pmb{z})$
		\If{using \textit{MAD-LM}}
			\State $\theta\leftarrow\theta-\alpha_\theta\nabla_\theta L(\theta,\pmb{z})$
		\EndIf
	\EndWhile
	\Ensure $\pmb{z}_\text{new}^*,\theta_\text{new}^*$, approximate solution $u\approx u_{\theta_{\text{new}}^*}(\cdot,\pmb{z}_{\text{new}}^*)$
\end{algorithmic}
\end{algorithm}

\subsection{Manifold Learning Interpretation of \textit{MAD-L}}\label{sec:MAD_L}

\begin{figure}
	\centering
	\includegraphics[width=0.6\columnwidth]{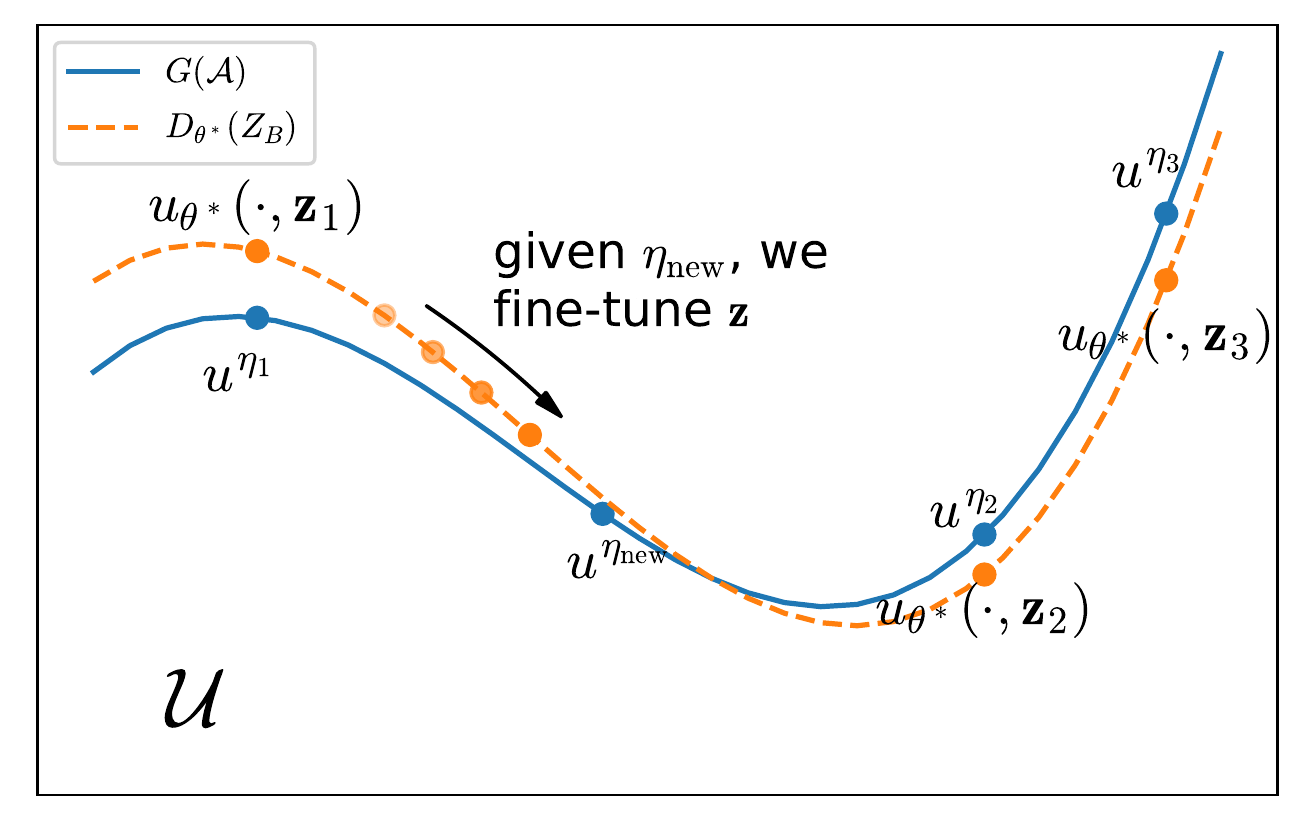}
	\caption{Illustration of how \textit{MAD-L} works from the manifold learning perspective.
	The function space $\mathcal{U}$ is mapped to a 2-dimensional plane.
	The solid curve represents the solution set $G(\mathcal{A})$ formed by exact solutions corresponding to all possible PDE parameters, and each point on the curve represents an exact solution corresponding to one PDE parameter.
	The dotted curve represents the trial manifold $D_{\theta^*}(Z_B)$ obtained by the pre-trained model, and each point on the curve corresponds to a latent vector $\pmb{z}$.
	Given $\eta_{\text{new}} \in \mathcal{A}$, rather than searching in the entire function space $\mathcal{U}$, \textit{MAD-L} only searches on the dotted curve to find an optimal $\pmb{z}$ such that its corresponding solution $u_{\theta^*}(\cdot,\pmb{z})$ is nearest to the point $u^{\eta_{\text{new}}}$.
	}
	\label{fig:MAD}
\end{figure}

We interpret how the \textit{MAD-L} method works from the manifold learning perspective, which also provides a new interpretation of the DeepSDF algorithm~\cite{park2019deepsdf}.
For the rest of this section, the domain $\Omega$ is fixed and excluded from $\eta$ for simplicity, and the general variable domain case can be handled similarly by choosing a master domain as explained in Sec.\ref{sec:widthsVarDom}. 
Now, we consider the following scenario. 
\begin{scenario}\label{as:lowdim}
	The decoder width of the solution set $d_{n,l}^\mathrm{Deco}(\mathcal{K})$, as defined in~\eqref{eq:decWidth}, is zero for certain $n,l<\infty$, and both infimums can be achieved. 
	In other words, there is a $l$-Lipschitz continuous mapping $D:Z\to\mathcal{U}$, and
	\begin{equation}
		\min_{\pmb{z}\in Z_B}\|u^\eta-D(\pmb{z})\|_\mathcal{U} =0
	\end{equation}
	holds for all $\eta\in\mathcal{A}$. 
	This in fact guarantees the relation $\mathcal{K}=G(\mathcal{A})\subseteq D(Z_B)$. 
\end{scenario}
Once the mapping $D$ is found as above, then for a given parameter $\eta$, 
searching for the solution $u^\eta$ in the whole space $\mathcal{U}$ is no longer needed. 
Instead, we may focus on the smaller trial manifold $D(Z_B)$, i.e. the class of functions in $\mathcal{U}$ that is parametrized by $\pmb{z}\in Z_B$, since $u^\eta=G(\eta)\in D(Z_B)$ holds for any $\eta\in\mathcal{A}$. 
We then solve the optimization problem
\begin{equation}\label{eq:trueDecoderSol}
\pmb{z}^\eta=\operatorname*{\argmin}_{\pmb{z}\in Z_B}L^\eta[D(\pmb{z})],
\end{equation} 
and $D(\pmb{z}^\eta)$ is the approximate solution. 

The remaining problem is how to find the mapping $D$. 
We first choose a proper dimension $n$ of the latent space $Z$, 
which is easy if the decoder width of the solution set $d_{n,l}^\mathrm{Deco}(\mathcal{K})$ is theoretically known, 
and can be done through trial and error for more general parametric PDEs. 
Since such a mapping $D$ is usually complex and hard to design by hand, we consider the $\theta$-parametrized
\footnote{Two types of parametrization are considered here. The latent vector $\pmb{z}$ parametrizes a point on the trial manifold $D(Z_B)$ or $D_\theta(Z_B)$, and $\theta$ parametrizes the shape of the entire trial manifold $D_\theta(Z_B)$.}
version $D_\theta:Z\to\mathcal{U}$, and find the best $\theta$ automatically by solving an optimization problem. 
The mapping $D_\theta$ can be constructed in the simple form
\begin{equation}
	D_\theta(\pmb{z})({\widetilde{\pmb{x}}})=u_\theta({\widetilde{\pmb{x}}},\pmb{z})
,\end{equation} 
where $u_\theta$ is a neural network 
that takes the concatenation of ${\widetilde{\pmb{x}}}\in\R^d$ and $\pmb{z}\in\R^n$ as input%
\footnote{Alternatives of the construction of $u_\theta$ exist, and a special one is considered in Sec.\ref{sec:helmholtz}. }. 
The next step is to find the optimal model weight $\theta$ via training, 
which essentially targets at learning a good trial manifold $D_\theta(Z_B)\approx D(Z_B)$. 
Replacing the mapping $D$ in~\eqref{eq:decWidth} by the parametrized version $D_\theta$, the optimization problem to be solved is
\begin{equation}
	\min_\theta\sup_{\eta\in\mathcal{A}}\inf_{\pmb{z}\in Z_B}\|u^\eta-D_\theta(\pmb{z})\|_\mathcal{U}
	=\min_\theta\sup_{\eta\in\mathcal{A}}\inf_{\pmb{z}\in Z_B}\|u^\eta-u_\theta(\cdot,\pmb{z})\|_\mathcal{U}
.\end{equation}
Several adaptations are introduced to derive a simplified problem. 
First, as keeping track of the supremum over $\eta\in\mathcal{A}$ is computationally difficult, it is relaxed to an expectation over $\eta\sim p_{_{\mathcal{A}}}$, where $p_{_{\mathcal{A}}}$ is the probability distribution of the PDE parameters. 
Second, in case we do not have direct access to the exact solutions $u^\eta$, the
physics-informed loss $L^\eta[u_\theta(\cdot,\pmb{z})]$ is used to replace the distance $\|u^\eta-u_\theta(\cdot,\pmb{z})\|_{\mathcal{U}}$. 
Third, the hard constraint $\pmb{z}\in Z_B=\{\pmb{z}\in Z\mid\|\pmb{z}\|\le 1\}$ is dropped, and a relaxed soft constraint is applied by introducing a penalty term $\frac1{\sigma^2}\|\pmb{z}\|^2$. 
We may assume the infimum over $\pmb{z}$ can be attained. 
The optimization problem now becomes
\begin{equation}
	\min_\theta \operatorname*{\mathbb{E}}_{\eta\sim p_{_{\mathcal{A}}}}\left[\min_{\pmb{z}\in Z}\left(L^\eta[u_\theta(\cdot,\pmb{z})]+\frac1{\sigma^2}\|\pmb{z}\|^2\right)\right]
.\end{equation}
In the specific implementation, the expectation on $\eta\sim p_{_{\mathcal{A}}}$ is estimated by Monte Carlo samples $\eta_1,\dots,\eta_N$, and the optimal network weight $\theta$ is taken to be
\begin{equation}\label{eq:rawMADtr}
	\theta^*\approx\operatorname*{\arg\min}_{\theta}\frac1N\sum_{i=1}^{N}\min_{\pmb{z}_i}\left(L^{\eta_i}[u_\theta(\cdot,\pmb{z}_i)]+\frac1{\sigma^2}\|\pmb{z}_i\|^2\right)
.\end{equation}
We further estimate the physics-informed loss $L^\eta$ using Monte Carlo method to obtain Eq.\eqref{eq:MADtr}.
After that, when a new PDE parameter $\eta_{\text{new}} \in \mathcal{A}$ comes, a direct adaptation of Eq.\eqref{eq:trueDecoderSol} would then give rise to the fine-tuning process Eq.\eqref{eq:MADinf}, since $u_{\theta^*}(\cdot,\pmb{z})=D_{\theta^*}(\pmb{z})\approx D(\pmb{z})$ holds. 
An intuitive illustration of how \textit{MAD-L} works from the manifold learning perspective is given in Fig.\ref{fig:MAD}. 

\subsubsection{A Visualization Example}
\begin{figure}
	\centering
	\includegraphics[width=0.7\columnwidth]{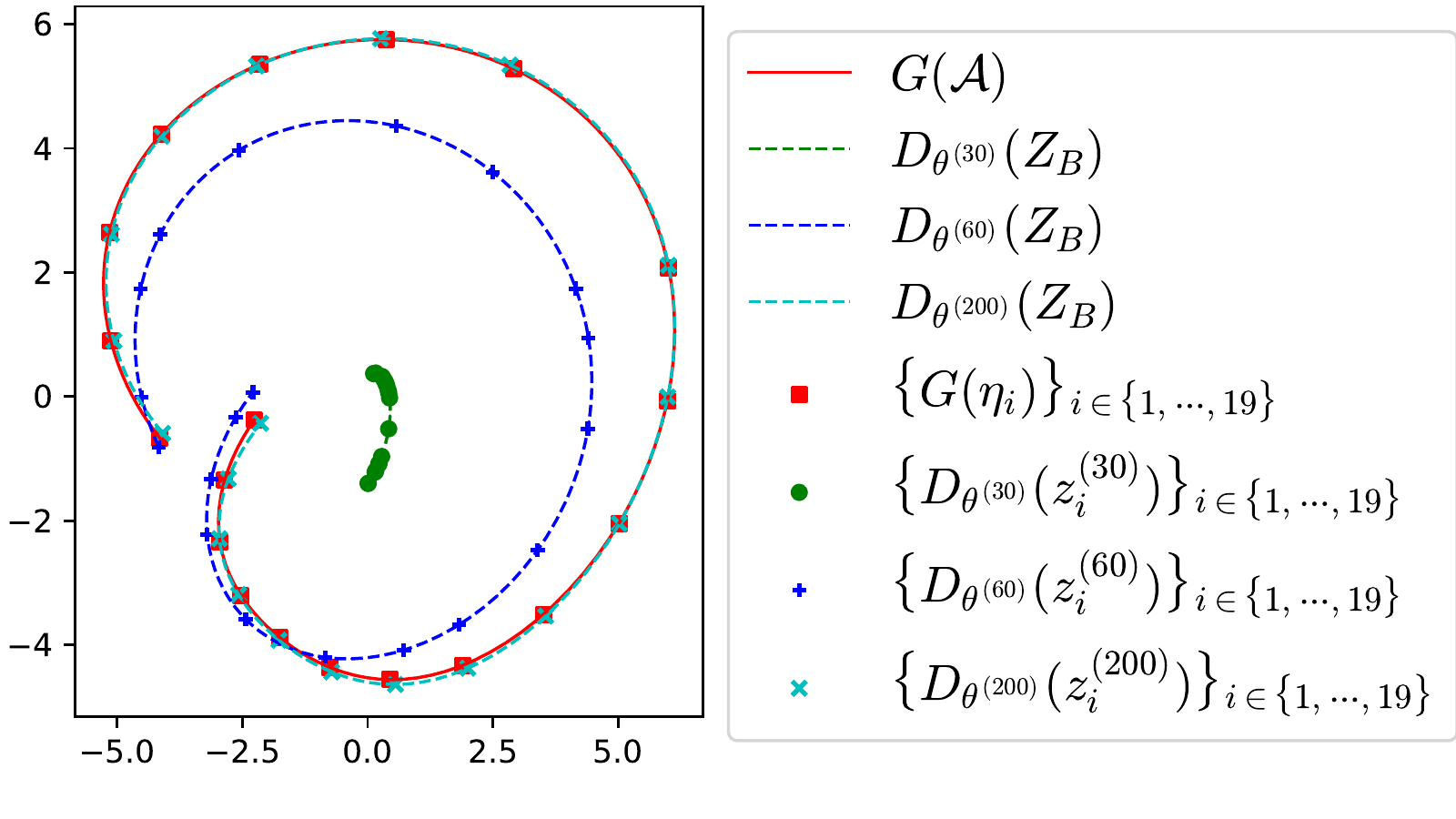}
	\caption{Visualization of the MAD pre-training process for the ODE problem.}%
	\label{fig:MADtr}
\end{figure}
\begin{figure}
	\centering
	\includegraphics[width=0.5\columnwidth]{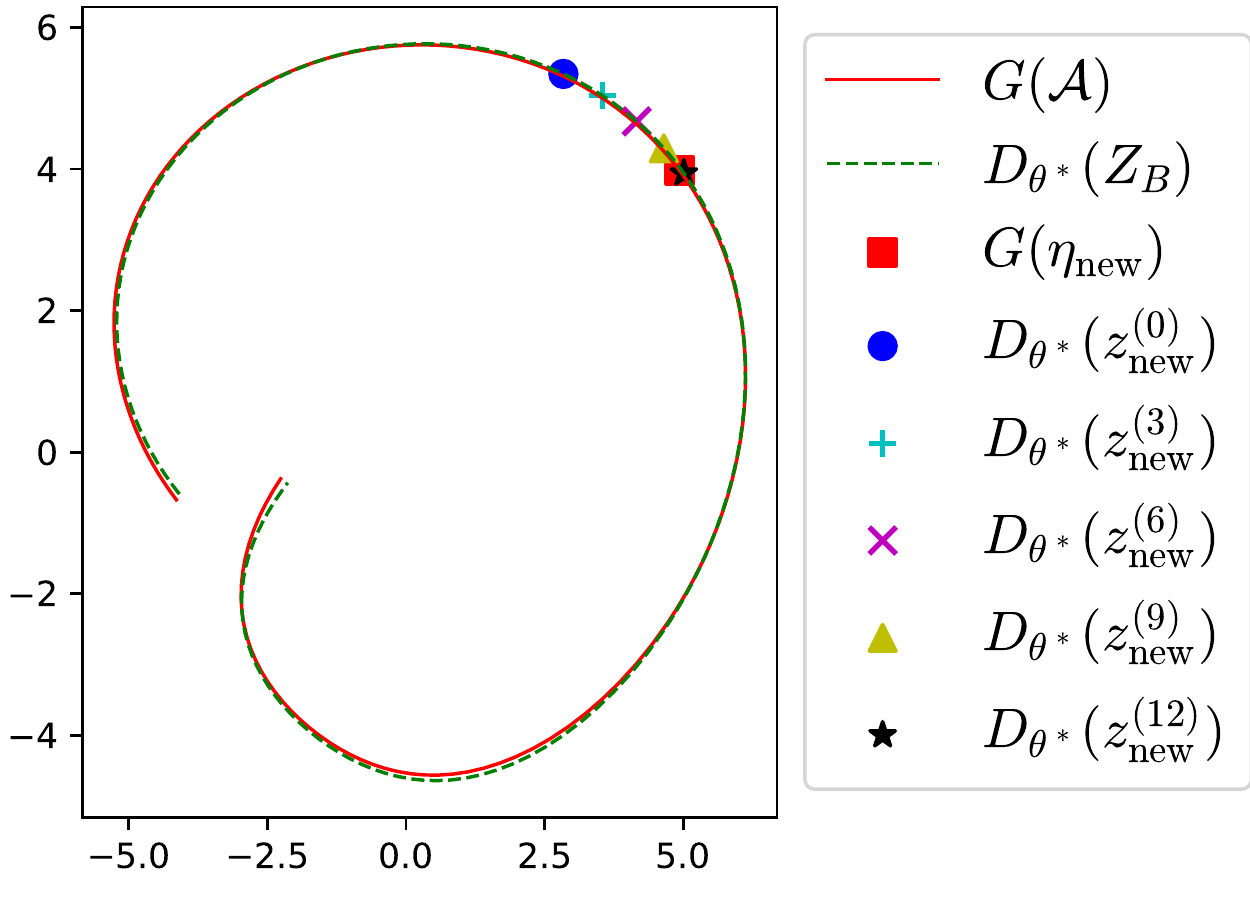}
	\caption{Visualization of the MAD fine-tuning process for the ODE problem.}%
	\label{fig:MADinfL}
\end{figure}

An ordinary differential equation (ODE) is used to visualize the pre-training and fine-tuning processes of \textit{MAD-L}. Consider the following problem with domain $\Omega=(-\pi,\pi)\subset\R$:
\begin{align}
	\frac{\mathrm{d}u}{\mathrm{d}x}&=2(x-\eta)\cos\bigl((x-\eta)^2\bigr),\qquad u(\pm\pi)=\sin\bigl((\pm\pi-\eta)^2\bigr).
\end{align}
We sample 20 points equidistantly on the interval $[0, 2]$ as the variable ODE parameters, and randomly select one $\eta_{\text{new}}$ for the fine-tuning stage and the rest $\{\eta_i\}_{i \in \{1,\cdots,19\}}$ for the pre-training stage.
Taking $Z=\R^1$, 
\textit{MAD-L} generates a sequence of $(\theta^{(m)}, \{z_i^{(m)}\}_{i\in\{1,\cdots,19\}})$ in the pre-training stage, and terminates at $m=200$ with the optimal $(\theta^*, \{z_i^*\}_{i\in\{1,\cdots,19\}})$.
The infinite-dimensional function space $\mathcal{U}=C([-\pi,\pi])$ is projected onto a 2-dimensional plane using Principal Component Analysis (PCA).
Fig.\ref{fig:MADtr} visualizes how $D_\theta(Z_B)$ gradually fits $G(\mathcal{A})$ in the pre-training stage.
The set of exact solutions $G(\mathcal{A})$ forms a 1-dimensional manifold (i.e. the solid curve), and the marked points $\{G(\eta_i)\}_{i \in \{1,\cdots,19\}}$ represent the corresponding ODE parameters used for pre-training.
Each dotted curve represents a trial manifold $D_\theta^{(m)}(Z_B)$ obtained by the neural network at the $m$-th iteration with the points $D_{\theta^{(m)}}(z_i^{(m)})=u_{\theta^{(m)}}(\cdot,z_i^{(m)})$ also marked on the curve.
As the number of iterations $m$ increases, the network weight $\theta=\theta^{(m)}$ updates, making the dotted curves evolve and finally fit the solid curve, i.e., the target manifold $G(\mathcal{A})$.
Fig.\ref{fig:MADinfL} illustrates the fine-tuning process for a given new ODE parameter $\eta_{\text{new}} \in \mathcal{A}$.
As in Fig.\ref{fig:MADtr}, the solid curve represents the set of exact solutions $G(\mathcal{A})$, while the dotted curve represents the trial manifold $D_{\theta^*}(Z_B) = D_{\theta^{(200)}}(Z_B)$ obtained by the pre-trained network.
As $z=z_{\text{new}}^{(m)}$ updates (i.e., through fine-tuning $z$), the marked point $D_{\theta^*}(z_{\text{new}}^{(m)})$ moves on the dotted curve, and finally converges to the approximate solution $D_{\theta^*}(z_{\text{new}}^*) = D_{\theta^*}(z_{\text{new}}^{(12)}) \approx G(\eta_{\text{new}})$.

\subsection{Manifold Learning Interpretation of \textit{MAD-LM}}\label{sec:MAD_LM}
\begin{figure}
	\centering
	\includegraphics[width=0.6\columnwidth]{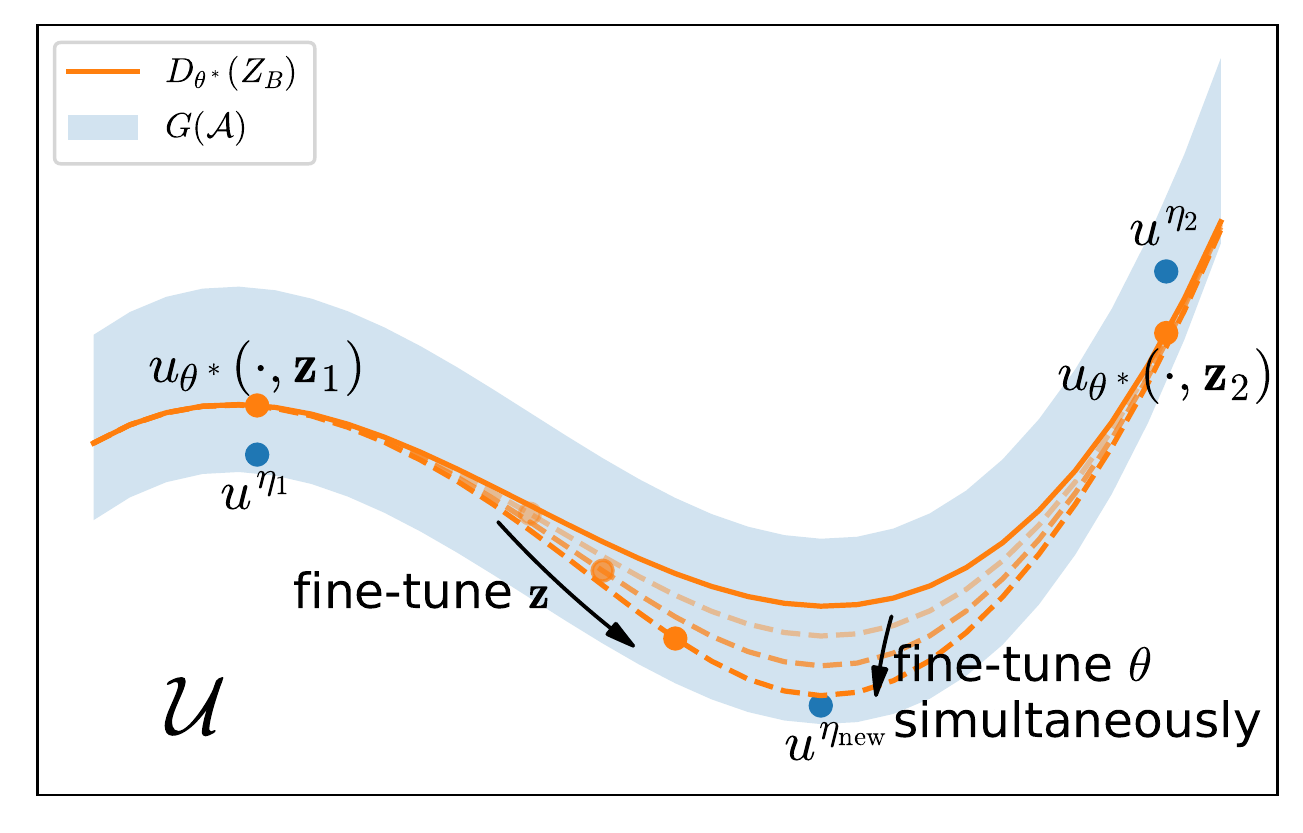}
	\caption{Illustration of how \textit{MAD-LM} works from the manifold learning perspective.
	The trial manifold $D_{\theta^*}(Z_B)$ obtained in the pre-training stage is represented by a solid curve, and the solution set $G(\mathcal{A})$ lies within a neighborhood of $D_{\theta^*}(Z_B)$ that is represented by a gray shadow band. 
	To find the solution $u^{\eta_{\text{new}}}$, we have to fine-tune $\theta$ (i.e., the dotted lines) and the latent vector $\pmb{z}$ (i.e., the points on the dotted lines) simultaneously to approach the exact solution. 
	As the search scope is limited to a strip with a small width, the fine-tuning process 
	can be expected to converge quickly. 
	}
	\label{fig:MADft}
\end{figure}

\textit{MAD-L} is designed for Scenario~\ref{as:lowdim}. 
However, many parametric PDEs encountered in real applications do not fall into this scenario, especially when the parameter set $\mathcal{A}$ of PDEs is an infinite-dimensional function space.
Simply applying \textit{MAD-L} to these PDE solving problems would likely lead to unsatisfactory results. 
\textit{MAD-LM} works in a more general scenario, and thus has the potential of getting improved performance for a wider range of parametric PDE problems. 
This alternative scenario is given as follows. 
\begin{scenario}\label{as:lowdimApprox}
	The decoder width of the solution set $d_{n,l}^\mathrm{Deco}(\mathcal{K})$, as defined in~\eqref{eq:decWidth}, is small for certain $n,l<\infty$. 
	In other words, there is a $l$-Lipschitz continuous mapping $D:Z\to\mathcal{U}$, and
	\begin{equation}
		\inf_{\pmb{z}\in Z_B}\|u^\eta-D(\pmb{z})\|_\mathcal{U} < c
	\end{equation}
	holds for all $\eta\in\mathcal{A}$, where $c$ is a relatively small constant. 
	This makes $\mathcal{K}=G(\mathcal{A})$ to be contained in the $c$-neighborhood of $D(Z_B)\subset\mathcal{U}$. 
\end{scenario}
In this new scenario, similar derivation leads to the same pre-training stage, which is used to find the initial decoder mapping $D_{\theta^*}\approx D$. 
However, in the fine-tuning stage, simply fine-tuning the latent vector $\pmb{z}$ won't give a satisfactory solution in general due to the existence of the $c$-gap. 
Therefore, we have to fine-tune the model weight $\theta$ with the latent vector $\pmb{z}$ simultaneously, and solve the optimization problem Eq.\eqref{eq:MADinf_LM}.
It produces a new decoder $D_{\theta_{\text{new}}^*}$ specific to the parameter $\eta_{\text{new}}$.
An intuitive illustration is given in Fig.\ref{fig:MADft}.

\section{Numerical Experiments}\label{sec:numerical_experiments}
To evaluate the effectiveness of the MAD method, we apply it to solve four parametric PDEs:
(1) Burgers' equation with variable initial conditions;
(2) Maxwell's equations with variable equation coefficients;
(3) Laplace's equation with variable computational domains and boundary conditions (heterogeneous PDE parameters);
and (4) Helmholtz's equation with variable sound speed distributions. 
Accuracy of the model is measured by $average\ relative\ L_2\ error$
(abbreviated as $L_2\ error$) between the predicted solutions and the reference solutions, and we provide the mean value and the 95\% confidence interval of $L_2\ error$.
In each experiment, the PDE parameters are divided into two sets: $S_1$ and $S_2$.
Parameters in $S_1$ correspond to sample tasks for pre-training, and parameters in $S_2$ correspond to new tasks for fine-tuning. 
The methods involved in the comparison are as follows:
\begin{itemize}
	\item \textit{From-Scratch}: Train the model from scratch based on the PINNs method~\cite{raissi2018deep} for all PDE parameters in $S_2$, case-by-case.
	\item \textit{Transfer-Learning}~\cite{weinan2018deep}: Randomly select a PDE parameter in $S_1$ for pre-training based on the PINNs method, and then load the weight obtained in the pre-training stage for PDE parameters in $S_2$ during the fine-tuning stage. 
	\item \textit{MAML}~\cite{finn2017model, antoniou2019train}: Meta-train the model for all PDE parameters in $S_1$ based on the MAML algorithm.
		In the meta-testing stage, we load the pre-trained weight $\theta^*$ and fine-tune the model for each PDE parameter in $S_2$.
	\item \textit{Reptile}~\cite{nichol2018reptile}: Similar to \textit{MAML}, except that the model weight is updated using the Reptile algorithm in the meta-training stage.
	\item \textit{PI-DeepONet}~\cite{wang2021learning}: The model is trained based on the method proposed in~\cite{wang2021learning} for all PDE parameters in $S_1$, and the inference is performed directly for the parameters in $S_2$.
	\item \textit{MAD-L}: Pre-train the model for all PDE parameters in $S_1$ based on our proposed method and then load and freeze the pre-trained weight $\theta^*$ for the second stage.
		In the fine-tuning stage, we choose a $\pmb{z}_i^*$ obtained in the pre-training stage to initialize a latent vector for each PDE parameter in $S_2$, and then fine-tune the latent vector.
		The selection of $\pmb{z}_i^*$ is based on the distance between $\eta_{\text{new}}$ and $\eta_i$.
	\item \textit{MAD-LM}: Different from \textit{MAD-L} that freezes the pre-trained weight, we fine-tune the model weight $\theta$ and the latent vector $\pmb{z}$ simultaneously in the fine-tuning stage.
\end{itemize}

Unless otherwise specified, we shall use the following configurations for the experiments:
\begin{itemize}
	\item In each iteration, we randomly sample $M_\text{r}=8192$ points in $\Omega$, and $M_\text{bc}=1024$ points on $\partial\Omega$. 
	\item To make a fair comparison, the network architectures of all methods (excluding PI-DeepONet) involved in comparison are the same except for the input layer due to the existence of a latent vector. 
		For Burgers' equation, Laplace's equation and Helmholtz's equation, the standard fully-connected neural networks with 7 fully-connected layers and 128 neurons per hidden layer are taken as the default network architecture.
		For Maxwell's equations, the MS-SIREN network architecture~\cite{huang2021solving} with 4 subnets is used, and each subnet has 7 fully-connected layers and 64 neurons per hidden layer.
		It is worth noting that our proposed method has gains in different network architectures, and we choose the network architecture that can achieve high accuracy for the \textit{From-Scratch} method to conduct our comparative experiments.
	\item The network architecture of \textit{PI-DeepONet} used for Burgers' equation is such that both the branch net and the trunk net have 7 fully-connected layers and 128 neurons per hidden layer.
	\item The sine function is used as the activation function, since it exhibits better performance than other alternatives such as the rectified linear unit (ReLU) and the hyperbolic tangent (Tanh)~\cite{sitzmann2020implicit}.
	\item The dimension of the latent vector $\pmb{z}$ is determined by trial and error and set to $128$ for Burgers' equation and Laplace's equation, $16$ for Maxwell's equations, and $64$ for Helmholtz's equation.
	\item The Adam optimizer~\cite{kingma2014adam} is used with the initial learning rate set to be $10^{-3}$ or $10^{-4}$, whichever achieves the better performance. 
		When the training process reaches 40\%, 60\% and 80\%, the learning rate is multiplied by $0.2$ in Helmholtz's equation, and $0.5$ in the other three equations. 
\end{itemize}

\begin{figure}
	\centering
	\includegraphics[width=0.7\columnwidth]{./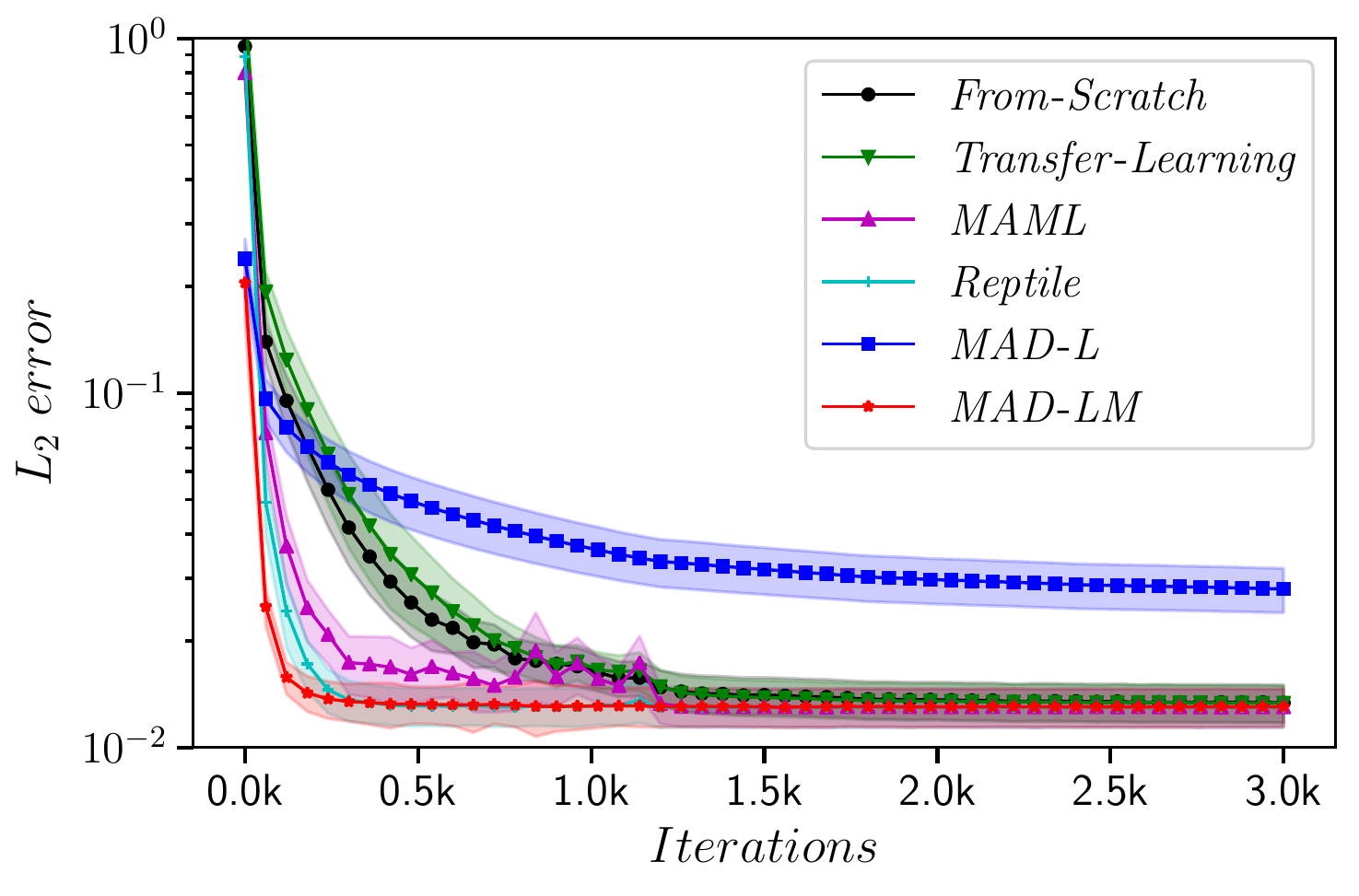}
	\caption{\textbf{Burgers' equation:} The convergence of mean $L_2\ error$ with respect to the number of training iterations.}%
	\label{fig:burgers_compare_other_method}
\end{figure}

\subsection{Burgers' Equation}\label{sec:burgers}

We consider the 1-D Burgers' equation with periodic boundary conditions:
\begin{equation}\label{def:burgers}
\begin{aligned}
	\frac{\partial u}{\partial t} + u \frac{\partial u}{\partial x} &= \nu \frac{\partial ^2 u}{\partial x^2}, \quad x \in (0,1), t \in (0,1],
	\\u(x,0) &= u_0(x), \quad x \in (0,1),
\end{aligned}
\end{equation}
which can model the one-dimensional flow of a viscous fluid. 
Here, $u(x,t)$ is the velocity field to be solved, $\nu=0.01$ is the viscosity coefficient, and the initial condition $u_0(x)$ is taken to be the variable parameter of the PDE, i.e. $\eta=u_0(x)$. 
The initial condition $u_0(x)$ is generated using a Gaussian random field (GRF)~\cite{liu2019advances} according to $u_0(x) \sim \mathcal{N}(0; 100(- \Delta + 9I)^{-3})$ with periodic boundary conditions.
We sample $150$ such initial conditions, and then randomly select $100$ cases to form $S_1$, leaving the rest $50$ for $S_2$. 
To generate the reference solutions, we construct a spatiotemporal mesh of size $1024\times 101$, and solved Eq.\eqref{def:burgers} using a split step method with the code released by~\cite{li2020fourier}. 
The hard constraint on periodic boundary condition is imposed in the neural network architectures as mentioned in~\cite{lu2021physics}.
For \textit{MAD-L} and \textit{MAD-LM}, the pre-training stages run for $50$k iterations while the \textit{Transfer-Learning} pre-trains 3k steps since it only handles one single case. 

Fig.\ref{fig:burgers_compare_other_method} shows the mean $L_2\ error$ of all methods as the number of training iterations increases in the fine-tuning stage. 
All methods converge to nearly the same accuracy (the mean $L_2\ error$ being close to $0.013$) except for \textit{MAD-L}, which could be probably due to the $c$-gap explained in Sec.\ref{sec:MAD_LM}. 
In terms of convergence speed, \textit{From-Scratch} and \textit{Transfer-Learning} need about 1200 iterations to converge, whereas \textit{MAML}, \textit{Reptile} and \textit{MAD-LM} need about 200 iterations to converge. 
\textit{MAD-LM} exhibits the highest convergence speed, requiring only 17\% of the training iterations of \textit{From-Scratch}. 
In this experiment, \textit{Transfer-Learning} does not show any advantage over \textit{From-Scratch}, which means that it fails to obtain any useful knowledge in the pre-training stage.
The model predictions of \textit{MAD-L} and \textit{MAD-LM} compared with the reference solutions for a randomly selected $u_0(x)$ in $S_2$ are shown in Fig.\ref{fig:burgers_result}. 
\textit{MAD-L} provides predictions that are in overall approximate agreement with the reference solutions, but the fit is poor at the spikes and troughs.
In the mean time, the solutions predicted by \textit{MAD-LM} is almost the same as the reference solutions.

\begin{figure}
\begin{center}
\centerline{\includegraphics[width=\columnwidth]{./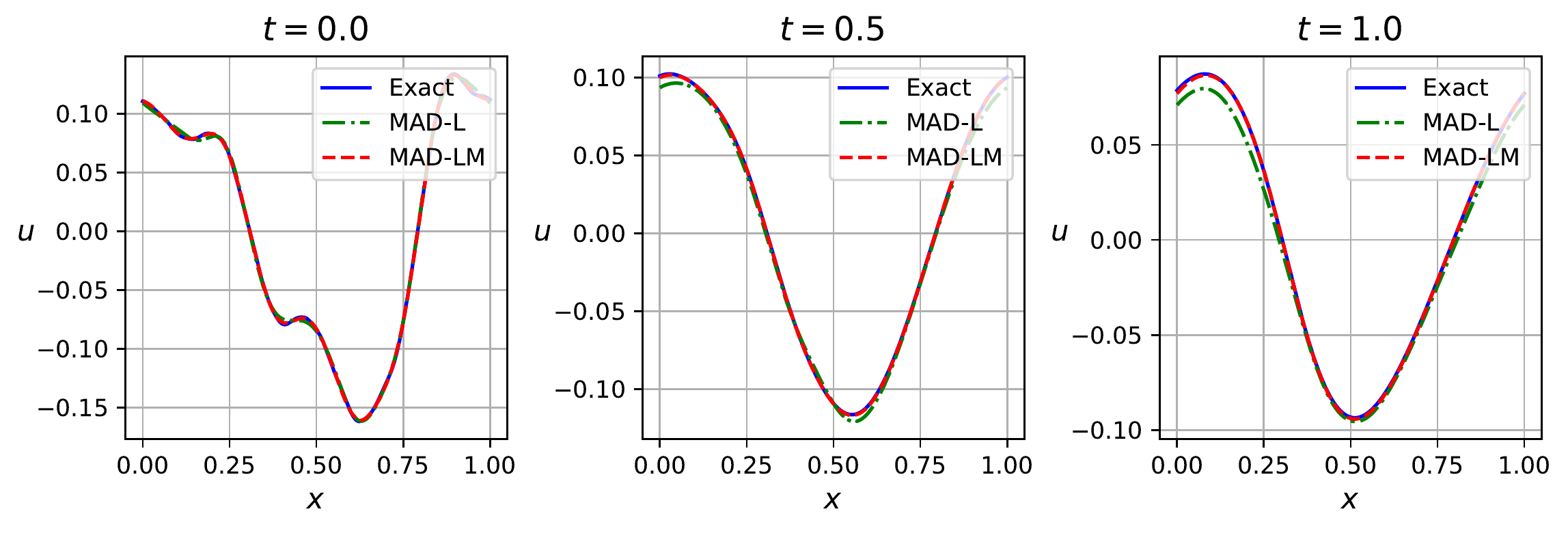}}
\caption{\textbf{Burgers' equation:} Reference solutions vs. model predictions at $t = 0.0$, $t = 0.5$ and $t = 1.0$, respectively.}
\label{fig:burgers_result}
\end{center}
\end{figure}

\begin{table}
\caption{The mean $L_2\ error$ of \textit{PI-DeepONet} and MAD under different numbers of samples in $S_1$.}
\label{tb:ad_vs_pi_deeponet}
\centering
\begin{tabular}{cccc}
	\toprule
	$|S_1|$ & \textit{PI-DeepONet} & \textit{MAD-L} & \textit{MAD-LM} \\
	\midrule
	10  &  0.715  & 0.365 & 0.015 \\
	50  &  0.247  & 0.046 & 0.013 \\
	100 &  0.217  & 0.028 & 0.013 \\
	200 &  0.169  & 0.020 & 0.013 \\
	300 &  0.181  & 0.018 & 0.013 \\
	400 &  0.183  & 0.019 & 0.013 \\
	\bottomrule
\end{tabular}
\vskip -0.2in
\end{table}

\textit{PI-DeepONet} can directly make the inference for unseen PDE parameters in $S_2$, so it has no fine-tuning process.
Table~\ref{tb:ad_vs_pi_deeponet} compares the mean $L_2\ error$ of \textit{PI-DeepONet} and MAD under different numbers of training samples in $S_1$.
The results show that \textit{PI-DeepONet} has a strong dependence on the number of training samples, and its mean $L_2\ error$ is remarkably high when $S_1$ is small. 
Moreover, its mean $L_2\ error$ is significantly higher than that of \textit{MAD-L} or \textit{MAD-LM} in all cases.

\begin{figure}
	\centering
	\includegraphics[width=0.6\columnwidth]{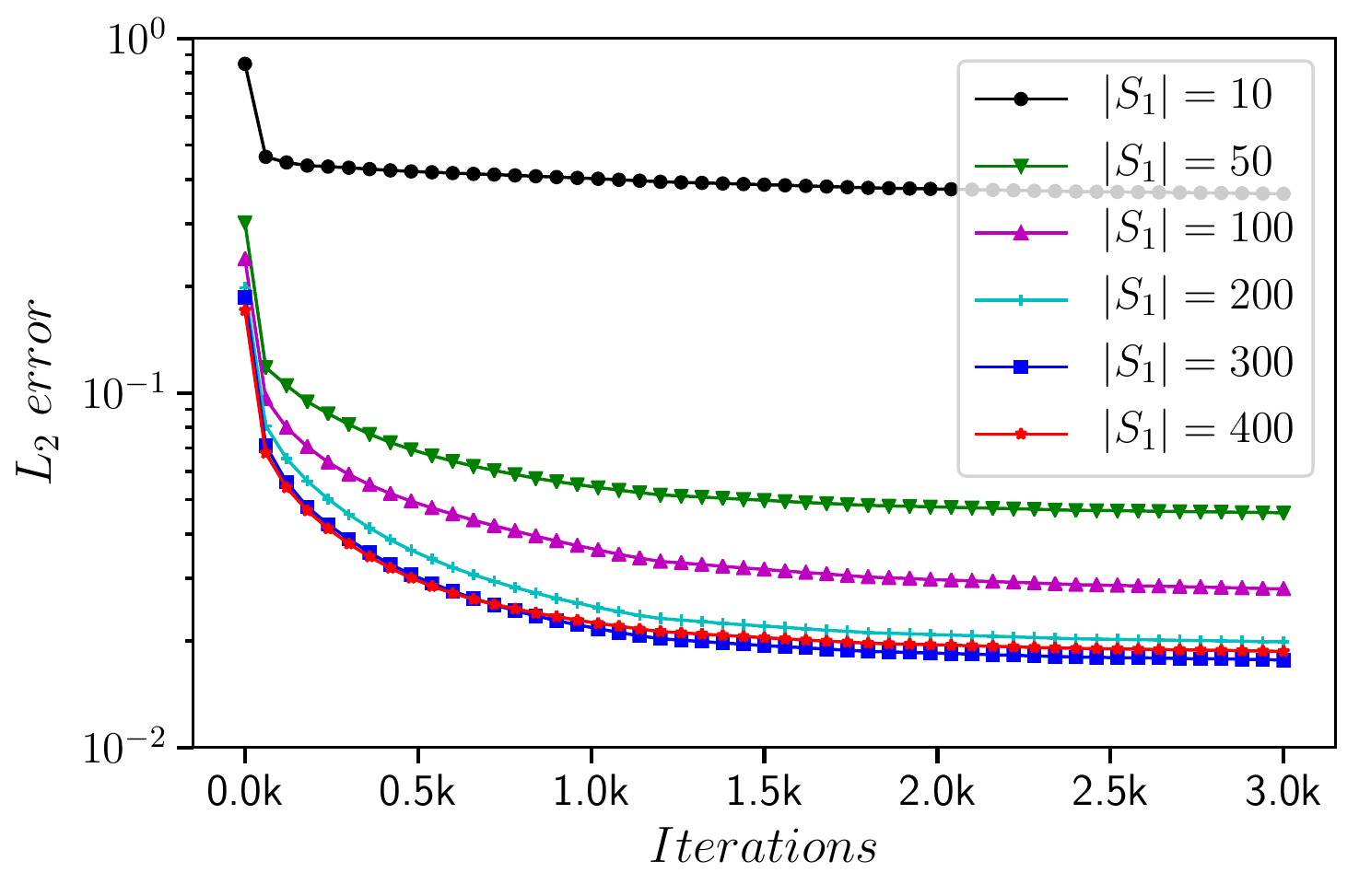}
	\caption{\textbf{Burgers' equation:} The convergence of mean $L_2\ error$ for \textit{MAD-L} with respect to the number of training iterations under different numbers of samples in $S_1$.}%
	\label{fig:burgers_ablation_samples_MAD_l}
\end{figure}
\begin{figure}
	\centering
	\includegraphics[width=0.6\columnwidth]{./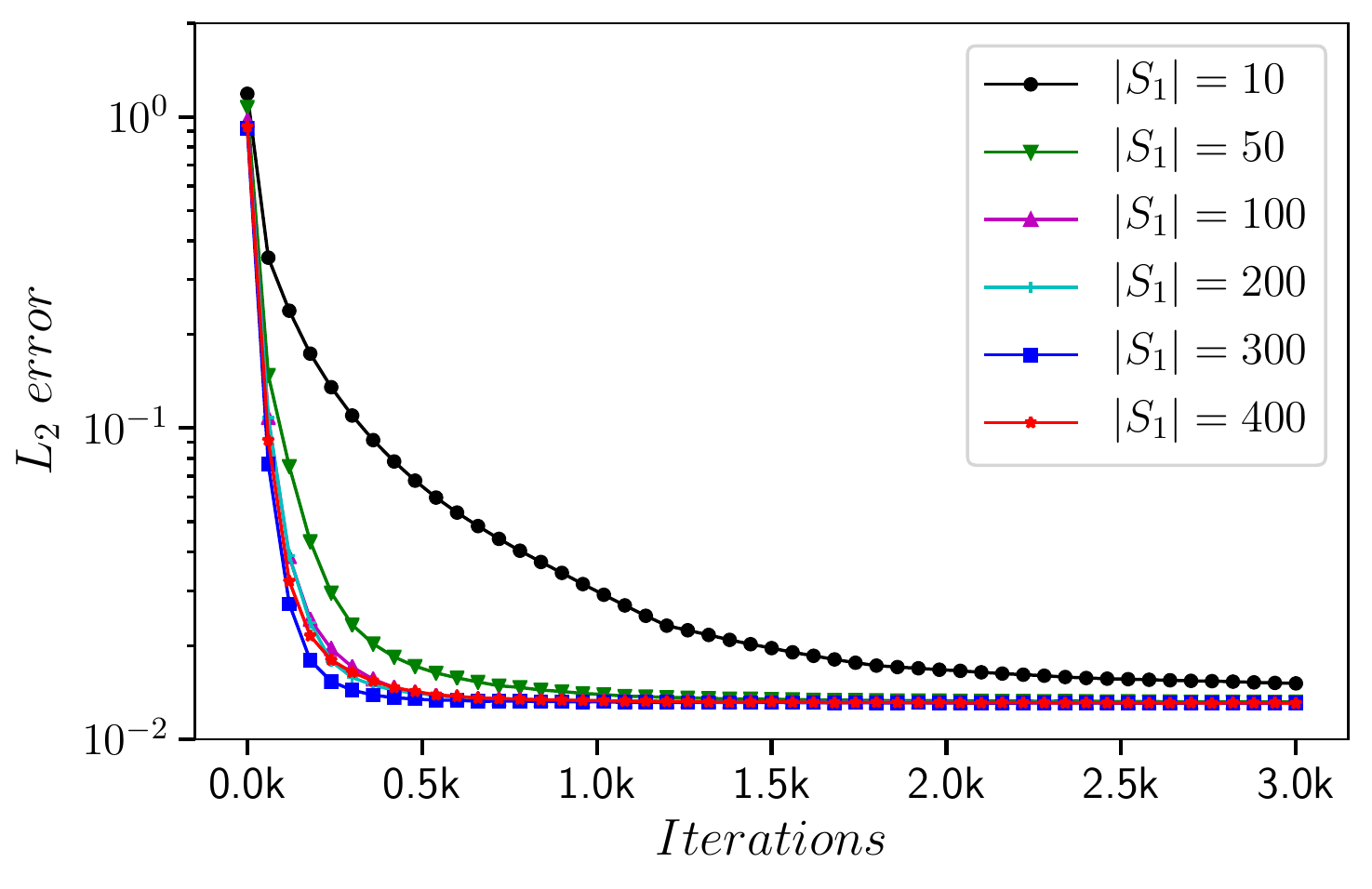}
	\caption{\textbf{Burgers' equation:} The convergence of mean $L_2\ error$ for \textit{MAD-LM} with respect to the number of training iterations under different numbers of samples in $S_1$.}%
	\label{fig:burgers_ablation_samples_MAD_lm}
\end{figure}

\bmhead{Effect of the size of $S_1$}
We investigated the effect of the number of samples $|S_1|$ in the pre-training stage on \textit{MAD-L} and \textit{MAD-LM}. 
Fig.\ref{fig:burgers_ablation_samples_MAD_l} shows that the accuracy of \textit{MAD-L} after convergence increases with $|S_1|$.
However, when $|S_1|$ reaches about 200, further increasing $|S_1|$ does not improve the accuracy of \textit{MAD-L}.
This result is consistent with the intuition shown in Fig.\ref{fig:MADft}.
Increasing the number of samples in the pre-training stage allows the trial manifold $D_{\theta^*}(Z_B)$ to gradually fall within the region formed by the solution set $G(\mathcal{A})$.
After $|S_1|$ reaches a certain level, the trial manifold only swings in the region of the solution set. 
Solely optimizing $\pmb{z}$ can make the predicted solution move inside the trial manifold formed by $D_{\theta^*}(Z_B)$, but  $u^{\eta_{\text{new}}}$ may not be close enough to this manifold.
Therefore, in order to obtain a more accurate solution, we need to fine-tune $\pmb{z}$ and $\theta$ simultaneously.
Fig.\ref{fig:burgers_ablation_samples_MAD_lm} shows that the accuracy and convergence speed of \textit{MAD-LM} do not change significantly with the increase of samples in the pre-training stage. 
It is only when the number of samples is very small (i.e., $|S_1| = 10$) that the convergence speed in the early stage is significantly affected.
This shows that \textit{MAD-LM} can perform well in the fine-tuning stage without requiring a large number of samples during the pre-training stage.

\bmhead{Effect of the latent size}
\begin{figure}
	\centering
	\includegraphics[width=0.6\columnwidth]{./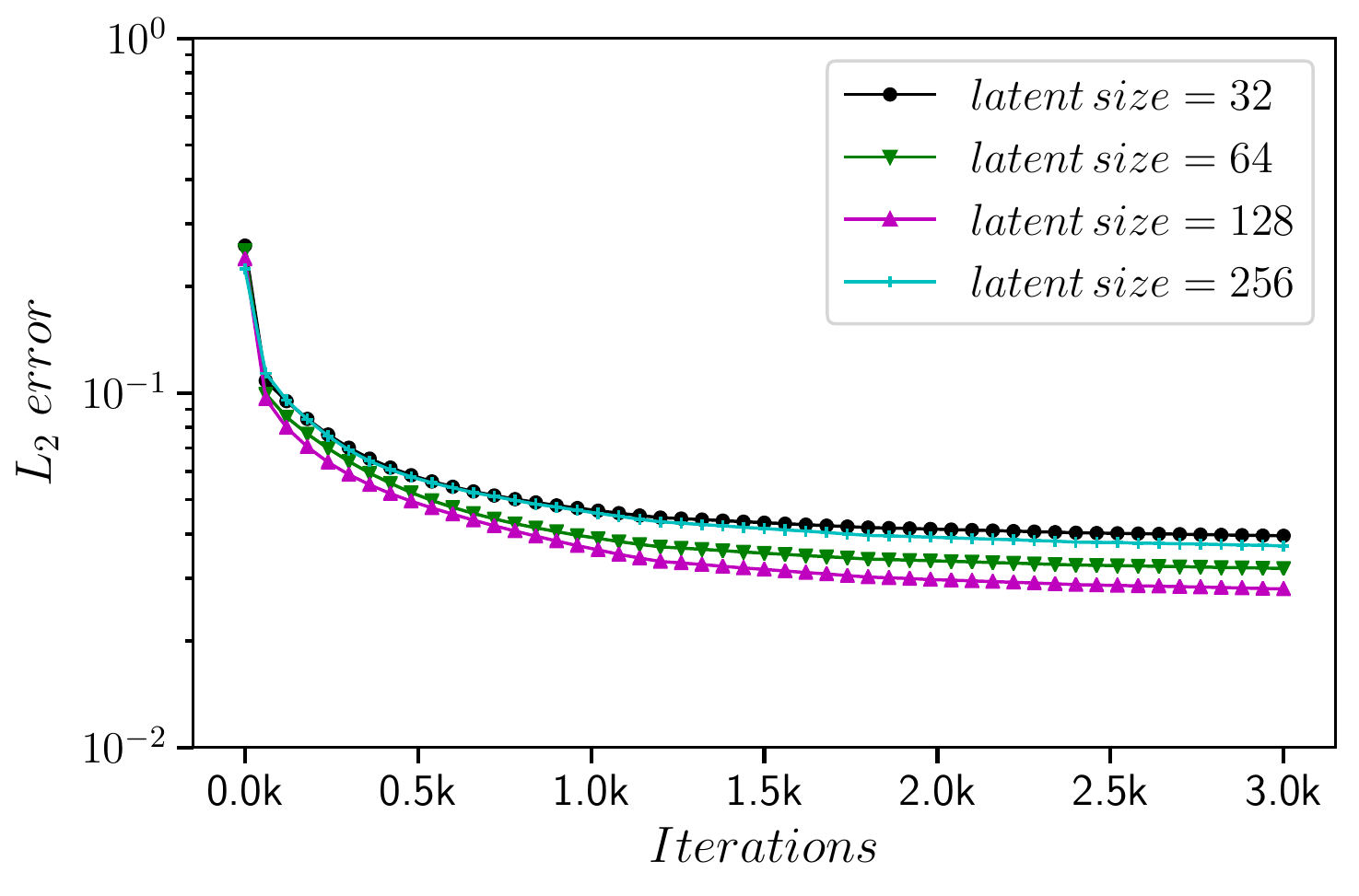}
	\caption{\textbf{Burgers' equation:} The convergence of mean $L_2\ error$ for \textit{MAD-L} with respect to the number of training iterations using different latent sizes.}%
	\label{fig:burgers_ablation_latent_size_MAD_l}
\end{figure}
\begin{figure}
	\centering
	\includegraphics[width=0.6\columnwidth]{./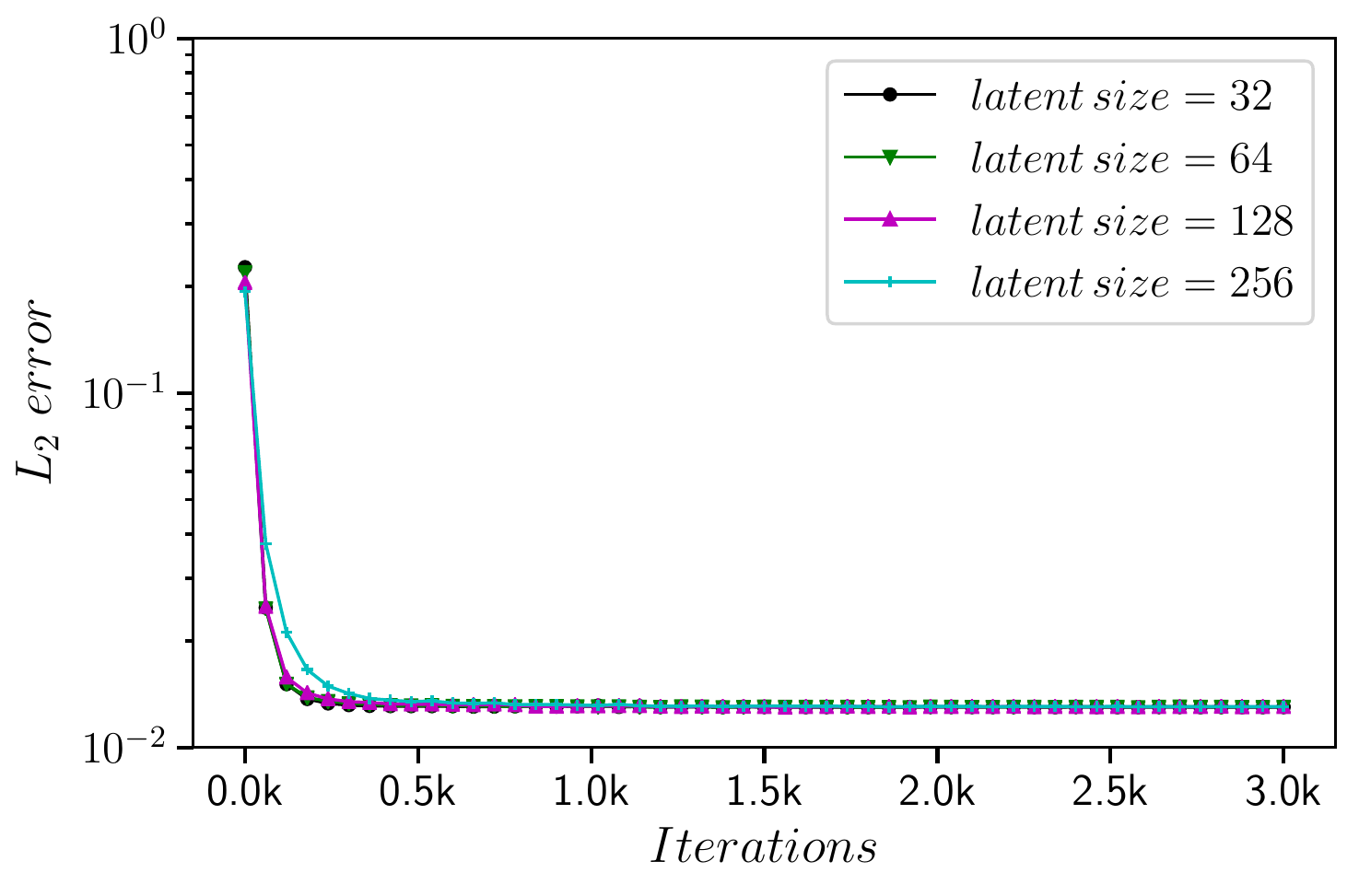}
	\caption{\textbf{Burgers' equation:} The convergence of mean $L_2\ error$ for \textit{MAD-LM} with respect to the number of training iterations using different latent sizes.}%
	\label{fig:burgers_ablation_latent_size_MAD_lm}
\end{figure}

We investigated the effect of the dimension of the latent vector (latent size) on performance. 
As can be seen from Fig.\ref{fig:burgers_ablation_latent_size_MAD_l}, for \textit{MAD-L}, different choices of the latent size would lead to different solution accuracy, among which the best is achieved at $n=128$. 
The difference is almost negligible for \textit{MAD-LM} according to Fig.\ref{fig:burgers_ablation_latent_size_MAD_lm}.

\verB{
\bmhead{Effect of a different loss function}
\begin{figure}
	\centering
	\includegraphics[width=0.7\columnwidth]{./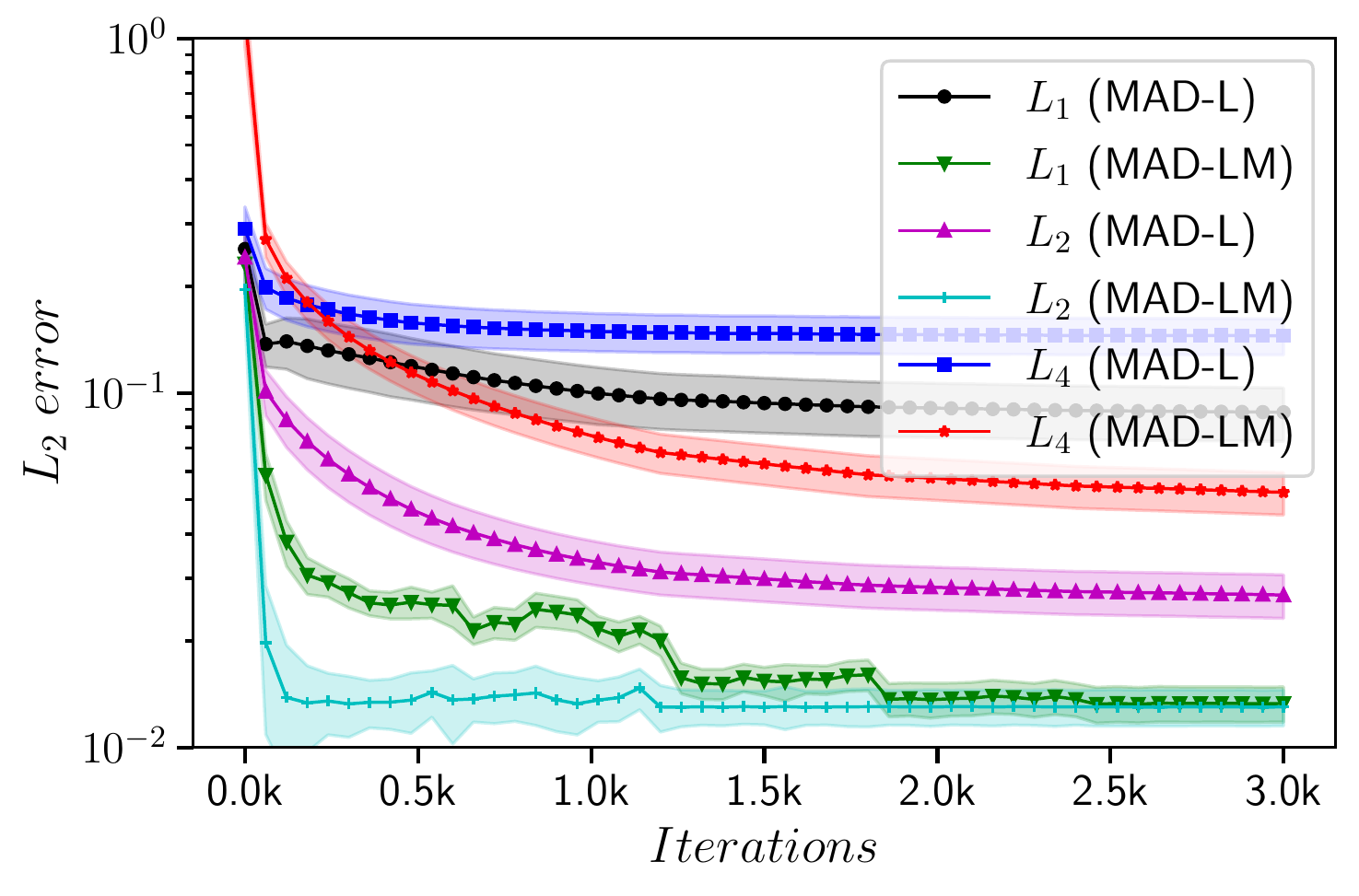}
	\caption{\textbf{Burgers' equation:} The convergence of mean $L_2\ error$ with respect to the number of training iterations using different physics-informed loss functions.}%
	\label{fig:burgers_ablation_norm}
\end{figure}
The physics-informed loss Eq.\eqref{eq:PIloss} can be generalized to
\begin{equation}
	L^\eta[u] = \|\mathcal{L}_{\widetilde{\pmb{x}}}^{\gamma_1} u\|_{L_p(\Omega)}^p + \lambda_\text{bc}\|\mathcal{B}_{\widetilde{\pmb{x}}}^{\gamma_2} u\|_{L_p(\partial\Omega)}^p
,\end{equation}
and is used consistently throughout the pre-training and the fine-tuning stages. 
We investigated the effect of $p$ on performance. 
As can be seen from Fig.\ref{fig:burgers_ablation_norm}, a different choice of $p$ affects the solution accuracy much for both \textit{MAD-L} and \textit{MAD-LM}, and the default choice $p=2$ is already good enough. 
However, the optimal choice of the loss function could be problem-dependent, especially for high-dimensional parametric PDEs. 
We refer the readers to~\cite{lu2022machineLE,wang2022isLP,Psaros2022MetaLP} for further discussions on this topic. 
}

\bmhead{Heterogeneous PDE parameters}
\begin{figure}
\begin{center}
\centerline{\includegraphics[width=0.7\columnwidth]{./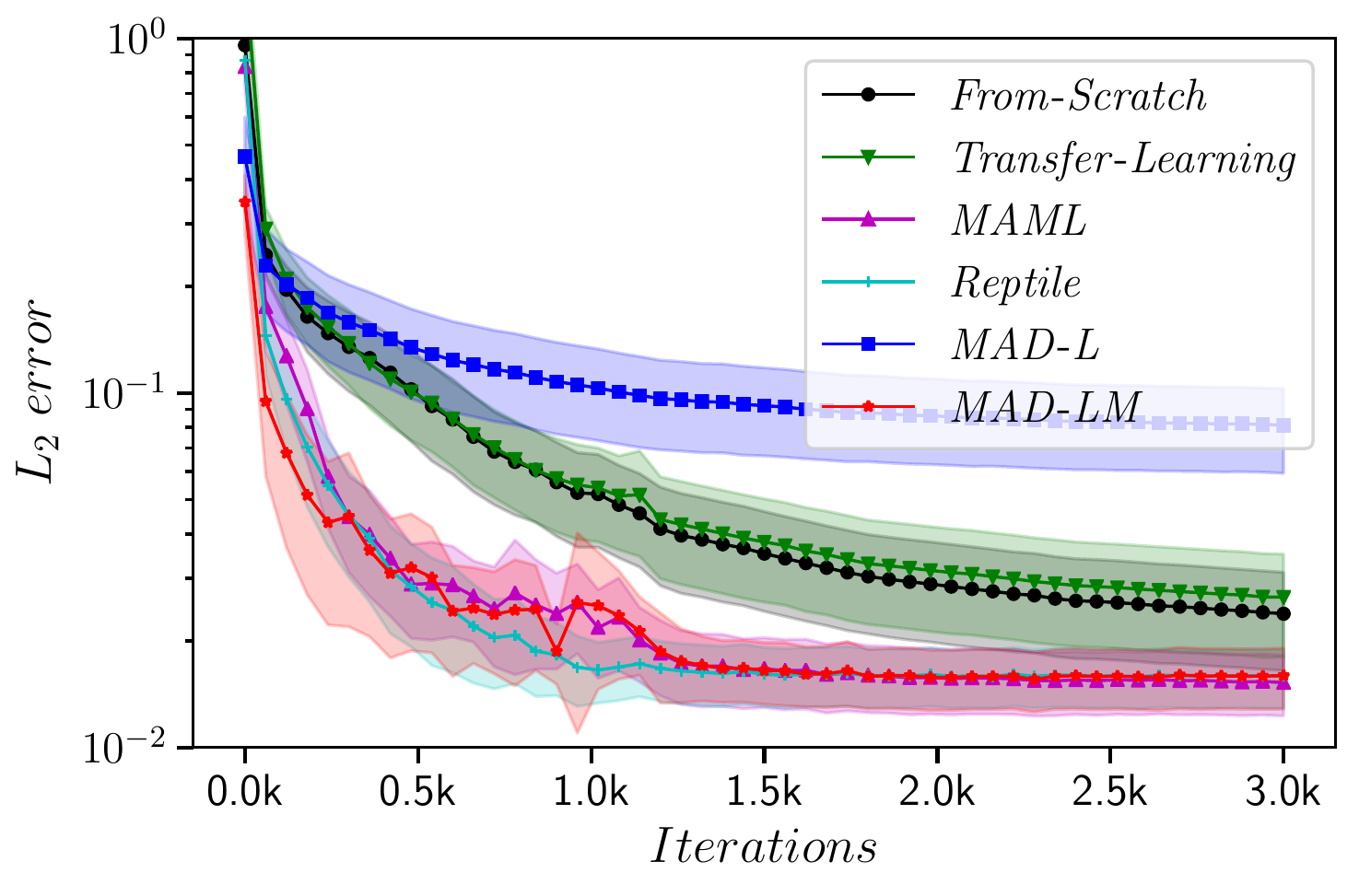}}
\caption{\textbf{Burgers' equation:} The mean $L_2\ error$ convergence with respect to the number of training iterations under heterogeneous PDE parameters.}
\label{fig:burgers_heterogeneous}
\end{center}
\end{figure}
We also consider the scenario when the viscosity coefficients $\nu$ in Eq.\eqref{def:burgers} vary within a certain range, which makes the variable PDE parameter $\eta=(\nu, u_0(x))$ heterogeneous. 
Specifically, we take $\nu \sim \{ 10^\beta \mid \beta \sim U(-3,-1) \}$ where $U$ is the uniform distribution, and 
$u_0(x) \sim \mathcal{N} (0; 100(- \Delta + 9I)^{-3})$ as before.
In this experiment, $|S_1|=100$ and $|S_2|=50$ while $S_1$ and $S_2$ come from the same task distribution.
Fig.\ref{fig:burgers_heterogeneous} compares the convergence curves of mean $L_2\ error$ corresponding to different methods.
\textit{MAD-LM} has an obvious speed and accuracy improvement over \textit{From-Scratch} and \textit{Transfer-Learning}.
It's worth noting that \textit{MAML} and \textit{Reptile} also perform well in this scenario.

\bmhead{Extrapolation}
In the above experiments, $\eta$'s in $S_1$ and $S_2$ come from the same GRF, so we can assume that the tasks in the pre-training stage come from the same task distribution as the tasks in the fine-tuing stage. 
We investigate the extrapolation capability of MAD, that is, tasks in the fine-tuing stage come from a different task distribution than those in the pre-training stage.
Specifically, $S_1$ is still the same as above, but $S_2$ is generated from $\mathcal{N}(0; 100(- \Delta + 25I)^{-2.5})$ instead.
Fig.\ref{fig:burgers_extrapolation_grf} shows the results of extrapolation experiments.
Since the distribution of tasks has changed, the trial manifold learned in the pre-training stage fits $G(\mathcal{A})$ worse, and the accuracy of \textit{MAD-L} diminishes as a consequence. 
However, as in Fig.\ref{fig:burgers_compare_other_method}, 
\textit{MAD-LM} exhibits a faster convergence speed than other methods. 
This validates the extrapolation capability of the MAD method. 

\begin{figure}
	\centering
	\includegraphics[width=0.7\columnwidth]{./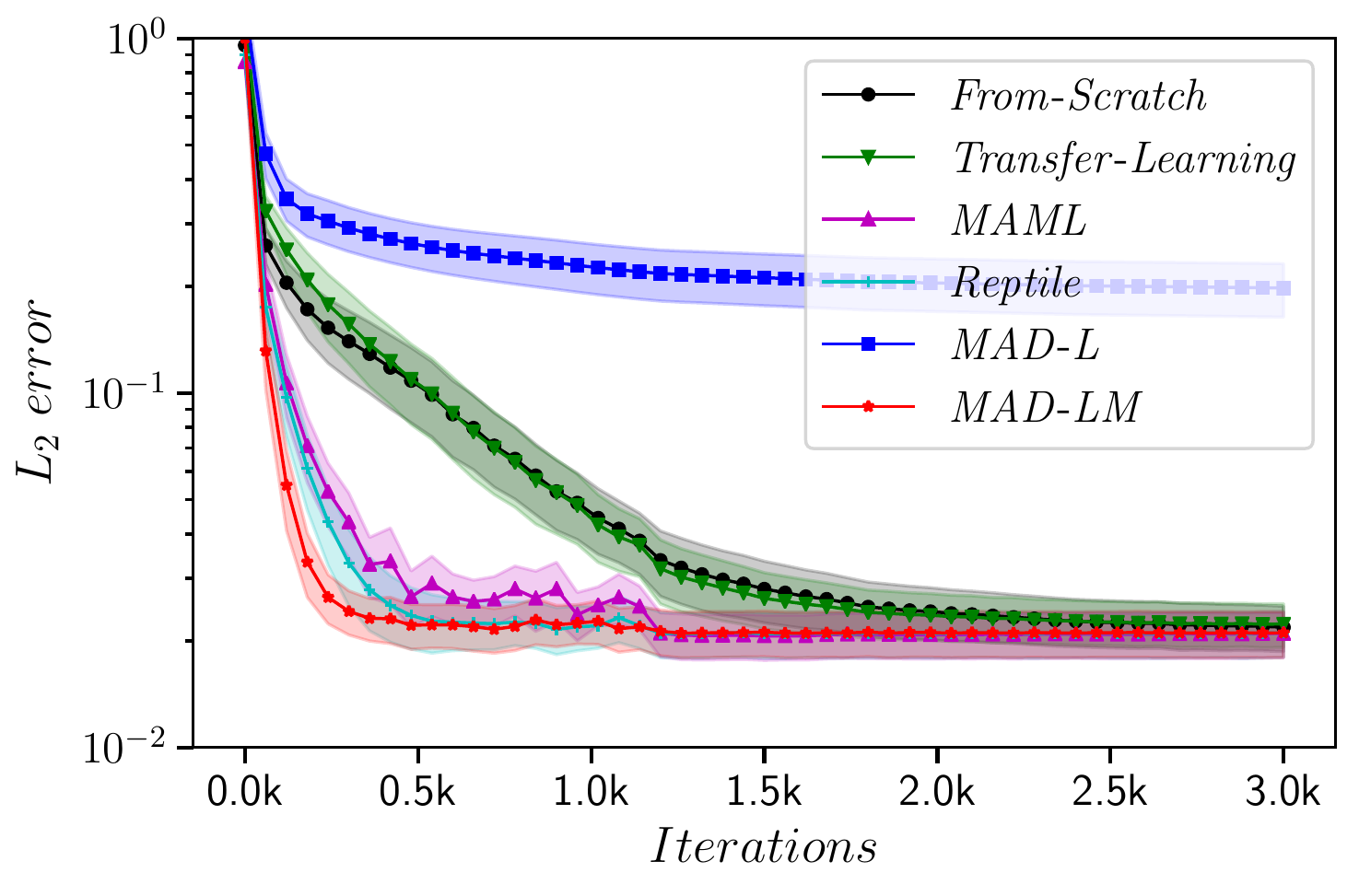}
	\caption{\textbf{Burgers' equation:} The convergence of mean $L_2\ error$ with respect to the number of training iterations for extrapolation experiments.}%
	\label{fig:burgers_extrapolation_grf}
\end{figure}

\subsection{Time-Domain Maxwell's Equations}\label{sec:maxwell}
\begin{figure}
	\centering
	\includegraphics[width=0.7\columnwidth]{./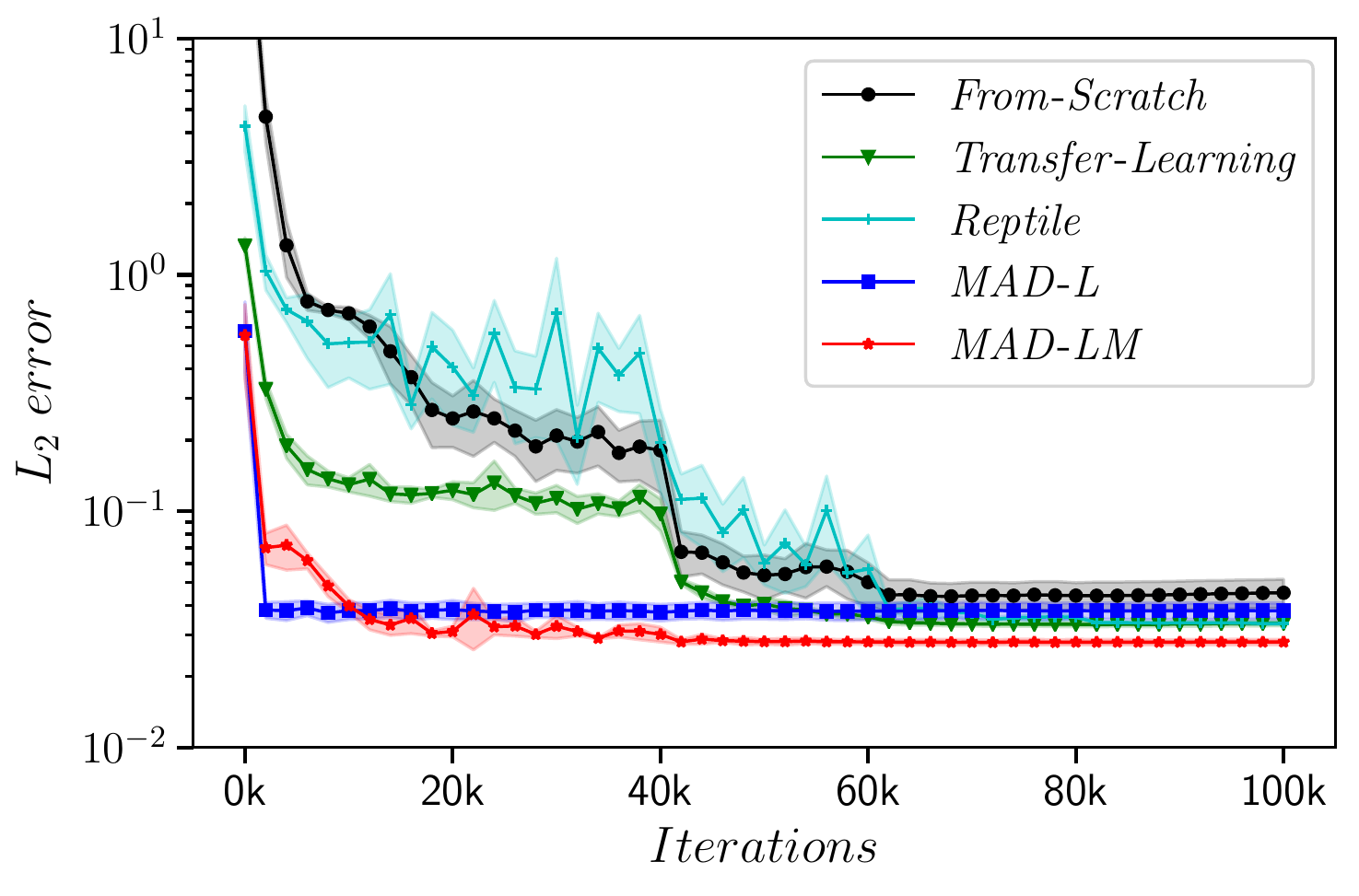}
	\caption{\textbf{Maxwell's equations:} The convergence of mean $L_2\ error$ with respect to the number of training iterations.}%
	\label{fig:maxwell_compare_other_method}
\end{figure}
We consider the time-domain 2-D Maxwell's equations with a point source in the transverse Electric (TE) mode~\cite{gedney2011introduction}:
\begin{equation}\label{eq:point_src}
\begin{aligned}
	\frac{\partial E_x}{\partial t } &= \frac{1}{\epsilon_0\epsilon_r} \frac{\partial H_z}{\partial y}, \quad
	\\\frac{\partial E_y}{\partial t } &= -\frac{1}{\epsilon_0\epsilon_r} \frac{\partial H_z}{\partial x}, \quad
	\\\frac{\partial H_z}{\partial t } &= -\frac{1}{\mu_0\mu_r} \left(\frac{\partial E_y}{\partial x} - \frac{\partial E_x}{\partial y} + J\right),
\end{aligned}
\end{equation}
where $E_x$, $E_y$ and $H_z$ are the electromagnetic fields, and $J$ is the point source term.
The equation coefficients $\epsilon_0$ and $\mu_0$ are the permittivity and permeability in vacuum, 
and $\epsilon_r$ and $\mu_r$ are the relative permittivity and relative permeability of the media, respectively.
The computational domain $\Omega$ is $[0, 1]^2 \times [0, 4\times 10^{-9}]$. 
The electromagnetic field is initialized to be zero everywhere and the boundary condition is the standard Mur's second-order absorbing boundary condition~\cite{schneider2010understanding}.
The source function $J$ in Eq.\ref{eq:point_src} is known and we set it as a Gaussian pulse. 
In the temporal domain, this function can be expressed as:
\begin{equation}\label{eq:gauss_pulse}
\begin{aligned}
	J(x, y, t) = \exp\left(-\left(\frac{t-d}{\tau }\right)^2\right) \delta(x-x_0)\delta(y-y_0)
\end{aligned}
,\end{equation}
where $d$ is the temporal delay, $\tau = 3.65 \times \sqrt{2.3}/(\pi f)$ is a pulse-width parameter, $\delta(\cdot)$ is the Dirac functional used to represent the point source, and $(x_0, y_0)=(0.5,0.5)$ is the location of the point source.
The characteristic frequency $f$ is set to be $1\,$GHz.

We take $(\epsilon_r$, $\mu_r)$ to be the variable parameters of the PDEs, i.e., $\eta=(\epsilon_r$, $\mu_r)$, which corresponds to the media properties in the simulation region. 
A total of 25 pairs of $(\epsilon_r$, $\mu_r)$ are collected from the equidistant grid of $[1,5]^2$, among which 20 are randomly selected to form $S_1$, and the rest 5 left for $S_2$.
The reference solutions are obtained through the finite-difference time-domain (FDTD)~\cite{schneider2010understanding} method. 
To deal with the singularity caused by the point source, we approximate $\delta(\cdot)$ with a smoothed function, and use the lower bound uncertainty weighting method
along with the MS-SIREN network structure as proposed in~\cite{huang2021solving}. 
We set the total number of iterations to $100$k 
for the training of \textit{From-Scratch}, the pre-training and fine-tuning of \textit{Transfer-Learning}, 
and the fine-tuning of both \textit{MAD-L} and \textit{MAD-LM}. 
A total of $200$k iterations are involved to pretrain the MAD method. 

Fig.\ref{fig:maxwell_compare_other_method} shows that all methods tested converge to a similar accuracy (mean $L_2\ error$ being close to $0.04$), and \textit{MAD-LM} achieves the lowest mean $L_2\ error$ ($0.028$). 
In terms of convergence speed, \textit{MAD-L} and \textit{MAD-LM} appear to be superior to the other methods. 
It is worth noting that \textit{MAML} fails to converge in the pre-training (or meta-training) stage, and the corresponding results are not included in Fig.\ref{fig:maxwell_compare_other_method}.
We suspect that this originates from the singularity brought by point source as well as the introduction of higher-order derivatives, which may pose great difficulties in solving the optimization problem. 
\textit{Reptile} does not show good generalization capability as well probably due to the same singularity issue.

\begin{figure}
	\centering
	\includegraphics[width=0.8\columnwidth]{./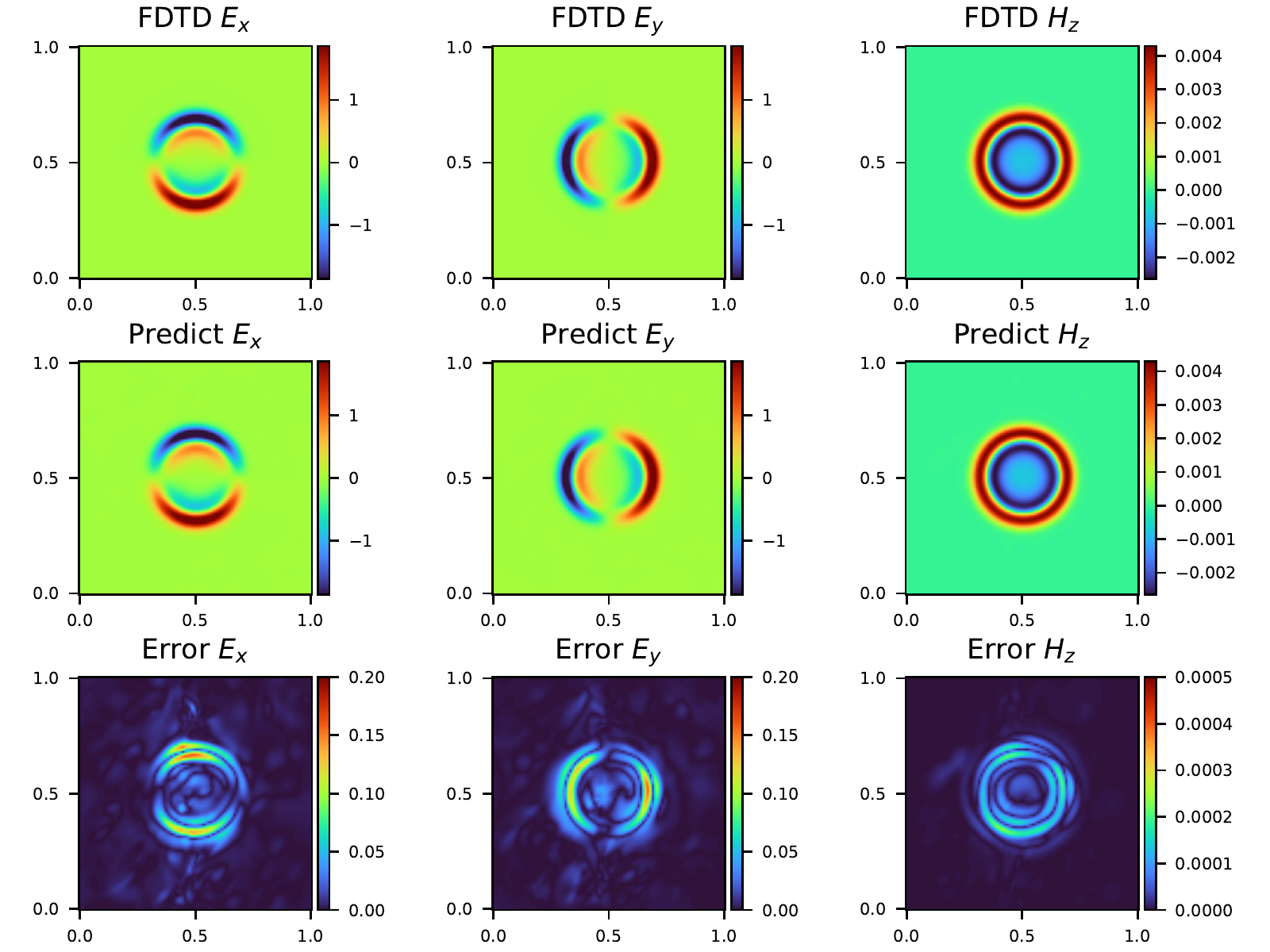}
	\caption{
		\textbf{Maxwell's equations:} Model predictions of \textit{MAD-L} compared with the FDTD solutions at $t = 4\,$ns.
		\textbf{Top:} The reference solutions of $(E_x, E_y, H_z)$ computed using FDTD.
		\textbf{Middle:} The predicted solutions of $(E_x, E_y, H_z)$ given by the learned model.
		\textbf{Bottom:} The absolute error between model predictions and the reference solutions.}
	\label{fig:maxwell_MAD_l}
\end{figure}
\begin{figure}
	\centering
	\includegraphics[width=0.8\columnwidth]{./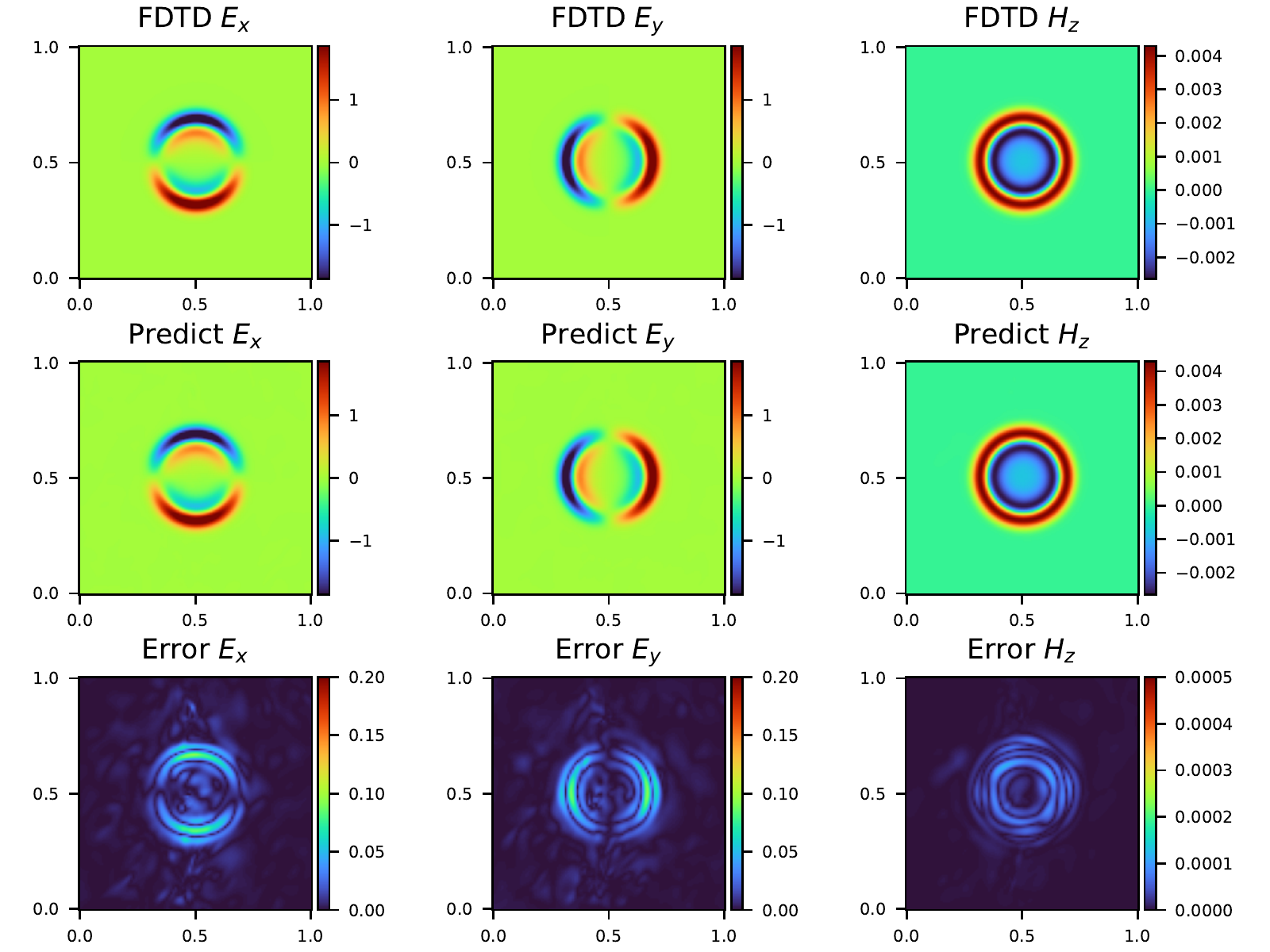}
	\caption{
		\textbf{Maxwell's equations:} Model predictions of \textit{MAD-LM} compared with the FDTD solutions at $t = 4\,$ns.
		\textbf{Top:} The reference solutions of $(E_x, E_y, H_z)$ computed using FDTD.
		\textbf{Middle:} The predicted solutions of $(E_x, E_y, H_z)$ given by the learned model.
		\textbf{Bottom:} The absolute error between model predictions and the reference solutions.}
	\label{fig:maxwell_MAD_lm}
\end{figure}

The instantaneous electromagnetic fields at time $t=4\,$ns of \textit{MAD-L} and \textit{MAD-LM} compared with the reference FDTD results for $(\epsilon_r$, $\mu_r) = (3, 5)$ are depicted in Fig.\ref{fig:maxwell_MAD_l} and Fig.\ref{fig:maxwell_MAD_lm}, respectively. 
\textit{MAD-LM} achieves a lower absolute error as can be observed in the figures. 
Specifically, the $L_2\ error$ is $0.037$ for \textit{MAD-L} and $0.030$ for \textit{MAD-LM}. 

\textit{PI-DeepONet} is also applied to the solution of time-domain Maxwell's equations with a point source. 
In this experiment, the branch net of \textit{PI-DeepONet} is a 4-layer fully connected network with 64 neurons in each hidden layer. 
The trunk net is an MS-SIREN~\cite{huang2021solving} network, which consists of 4 subnets, each with 7 fully connected layers and 64 neurons in each hidden layer.
The PDE parameter $(\epsilon_r,\mu_r)$ is taken directly as the input of the branch net. 
As there are 3 fields to be predicted $\left( E_x(x,y,t), E_y(x,y,t), H_z(x,y,t) \right)$, we adopt the method proposed in~\cite{LuLu2021ACA} to solve the multi-output problem, i.e., split the outputs of both the branch net and the trunk net into 3 groups, and then the $k$-th group is expected to provide the $k$-th field of the solution.
However, due to the optimization difficulties caused by the singularity of the point source, \textit{PI-DeepONet} struggles to achieve a satisfactory accuracy (mean $L_2\ error$ being $0.672$), while the mean $L_2\ error$ of \textit{MAD-LM} is $0.028$.

\begin{figure}
	\centering
	\includegraphics[width=0.7\columnwidth]{./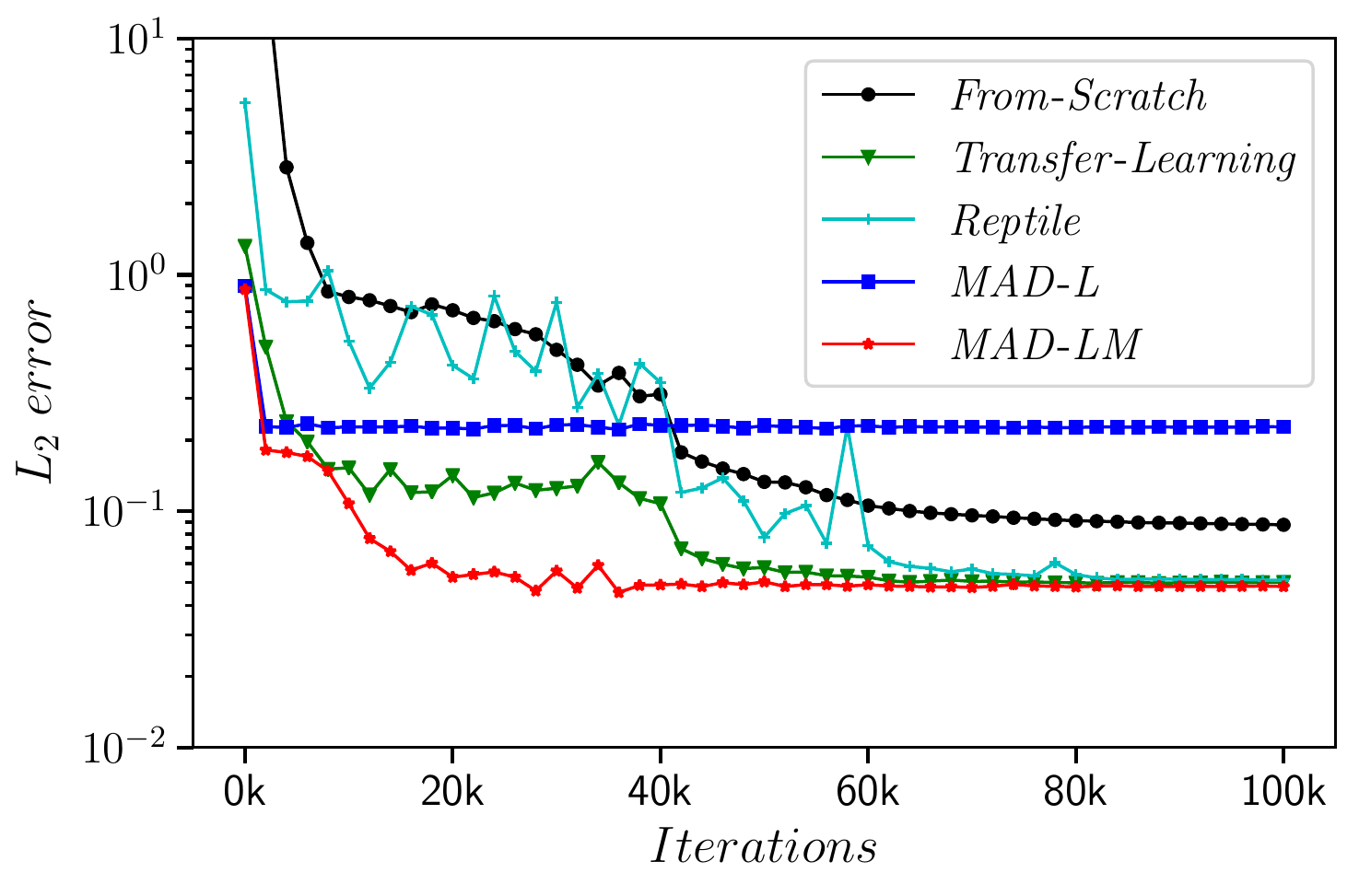}
	\caption{\textbf{Maxwell's equations:} The convergence of mean $L_2\ error$ with respect to the number of training iterations for extrapolation experiments.}%
	\label{fig:maxwell_extrapolation}
\end{figure}
\bmhead{Extrapolation}
In this case, the PDE parameters $(\epsilon_r, \mu_r)$ in $S_1$ come from $[1,5]^2$ as before, but in the fine-tuning stage, only the case $(\epsilon_r, \mu_r)=(7, 7)$ is considered.
Since the extrapolated task does not lie in the task distribution used in the pre-training stage, the corresponding point $G(\eta_\text{new})$ in the function space $\mathcal{U}$ is not on the learned trial manifold $D_{\theta^*}(Z_B)$, which makes \textit{MAD-L} to converge to a poor accuracy.
Fig.\ref{fig:maxwell_extrapolation} indicates that \textit{MAD-LM} is significantly faster than \textit{From-Scratch} and \textit{Reptile} in convergence speed while maintaining high accuracy.
Notably, \textit{Transfer-Learning} also exhibits faster convergence than \textit{From-Scratch} and \textit{Reptile}.
As a matter of fact, the PDE parameter randomly selected in its pre-training stage appears to be $(\epsilon_r, \mu_r)=(4, 5)$, which is very close to $(\epsilon_r, \mu_r)=(7, 7)$ in Euclidean distance.

\subsection{Laplace's Equation}\label{sec:laplace}
Consider the 2-D Laplace's equation as follows:
\begin{equation}\label{def:laplace}
\begin{aligned}
	\frac{\partial ^2 u}{\partial x^2}  + \frac{\partial ^2 u}{\partial y^2} &= 0, & (x,y) \in \Omega, 
	\\u(x,y) &= g(x,y), & (x,y) \in \partial \Omega,
\end{aligned}
\end{equation}
where the shape of $\Omega$ and the boundary condition $g(x,y)$ are the variable parameters of the PDE, i.e. $\eta=(\Omega, g(x,y))$.
The computational domain $\Omega$ is a convex polygon arbitrarily taken from the interior of the unit disk. 
To be more specific, we randomly choose an interger $k$ from $\{3,4,\dots,10\}$, and then sample $k$ points on the unit circle to form a convex polygon with $k$ vertices. 
Given that $h(x,y)$ is the boundary condition on the unit circle, we use a GRF to generate $h \sim \mathcal{N}(0, 10^{3/2}(-\Delta + 100I)^{-3})$ with periodic boundary conditions.
The analytical solution of the Laplace's equation on the unit disk can be obtained by computing the Fourier coefficients. 
Restricting this solution to the $k$ sides of the polygon would give rise to the boundary condition $g(x,y)$,
whereas restricting it to the interior of the polygon produces the reference solution. 
A total of $100$ such PDE parameters are generated to form $S_1$ and $50$ to form $S_2$, each corresponding to a different computational domain and a different $h(x,y)$, thereby a different boundary condition $g(x,y)$. 
For each set of the PDE parameter, we randomly select $16\times 1024$ points from the interior of the polygon, and obtain the analytic solution corresponding to these points to evaluate the accuracy of the models.

Note that the variable PDE parameters include the shape of the polygonal computational domain and the boundary conditions on the sides of the polygon, and are therefore heterogeneous. 
MAD can implicitly encode such heterogeneous PDE parameters as latent vectors conveniently, whereas \textit{PI-DeepONet} is unable to handle this case without further adaptations.
When we apply the MAD method to solve this problem, it is not convenient to measure the distance between $\eta_{\text{new}}$ and $\eta_i$ 
due to such heterogeneity. 
Therefore, the average of $|S_1|=100$ latent vectors obtained in the pre-training stage is used as the initialization of $\pmb{z}$ in the fine-tuning stage.
For the training of \textit{From-Scratch}, the pre-training and fine-tuning of \textit{Transfer-Learning}, we set the total number of iterations to $10$k. 
As for \textit{MAD-L} and \textit{MAD-LM}, the pre-training and fine-tuning involve $50$k and $10$k iterations, respectively. 

\begin{figure}
	\centering
	\includegraphics[width=0.7\columnwidth]{./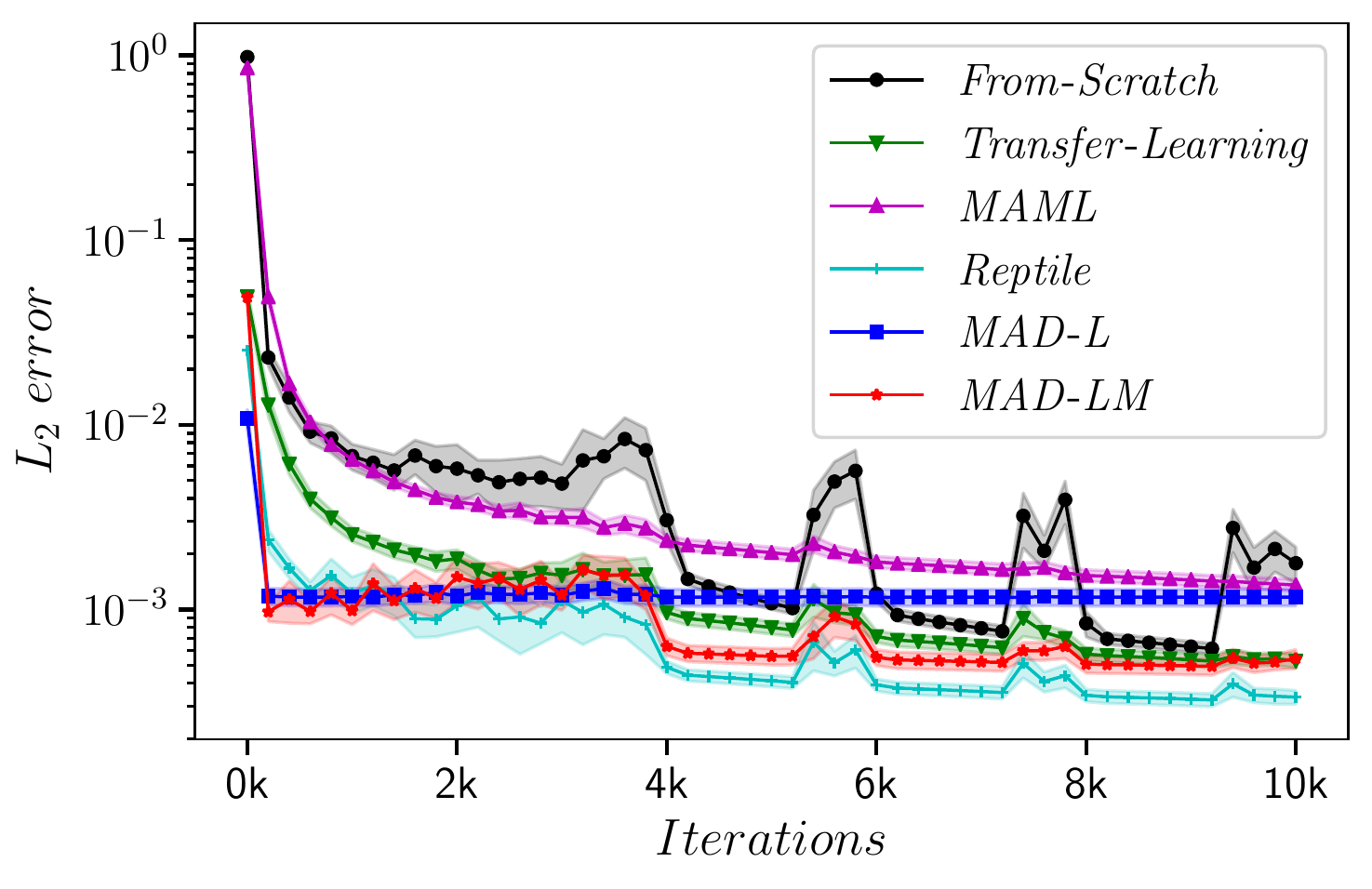}
	\caption{\textbf{Laplace's equation:} The convergence of mean $L_2\ error$ with respect to the number of training iterations.}
	\label{fig:laplace_polygon_a}
\end{figure}
\begin{figure}
	\centering
	\includegraphics[width=0.7\columnwidth]{./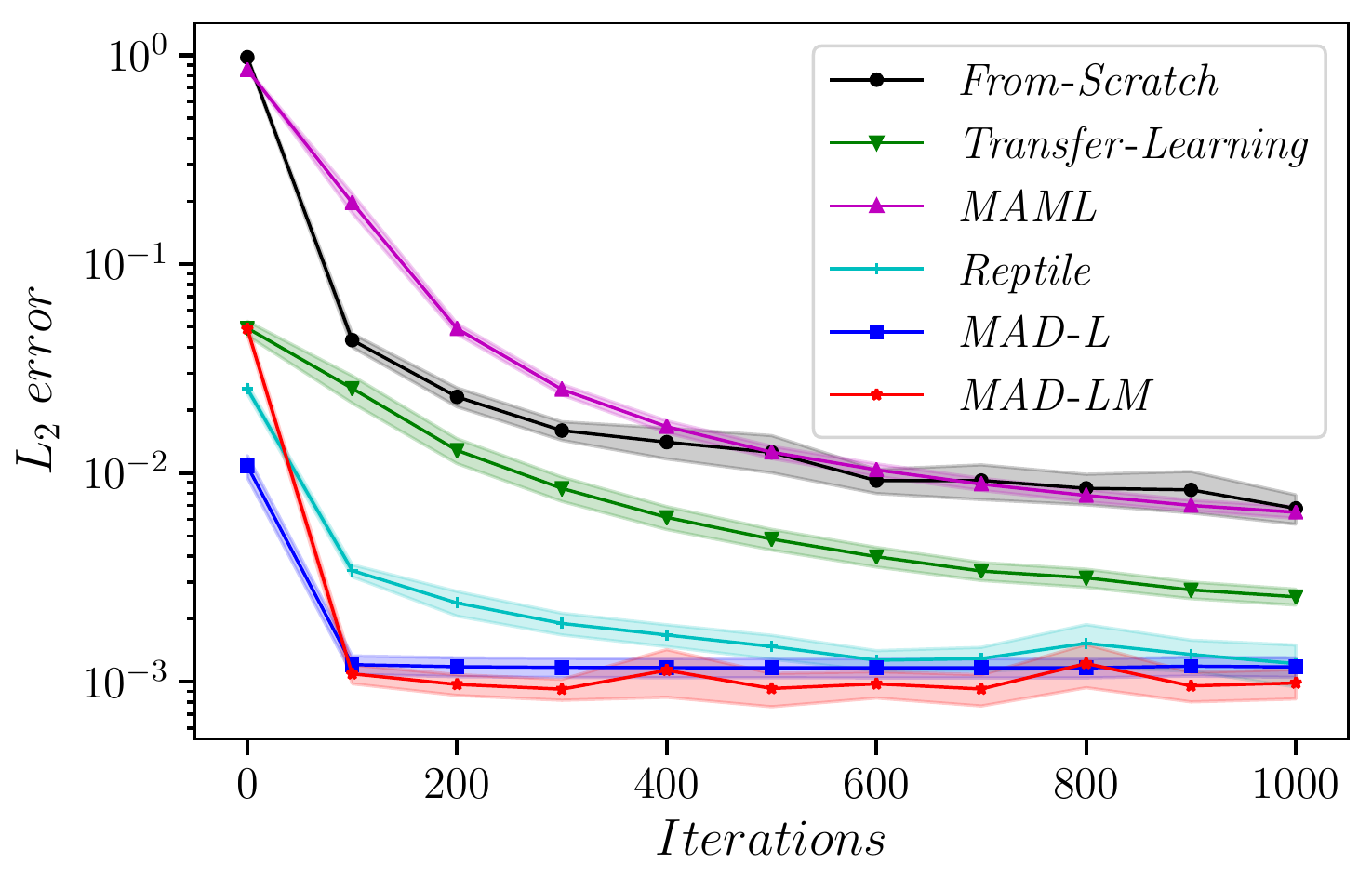}
	\caption{\textbf{Laplace's equation:} The convergence of mean $L_2\ error$ with respect to the number of training iterations, 
	zoomed-in to the first 1000 iterations.}
	\label{fig:laplace_polygon_b}
\end{figure}

\begin{figure}
\begin{center}
\centerline{\includegraphics[width=\columnwidth]{./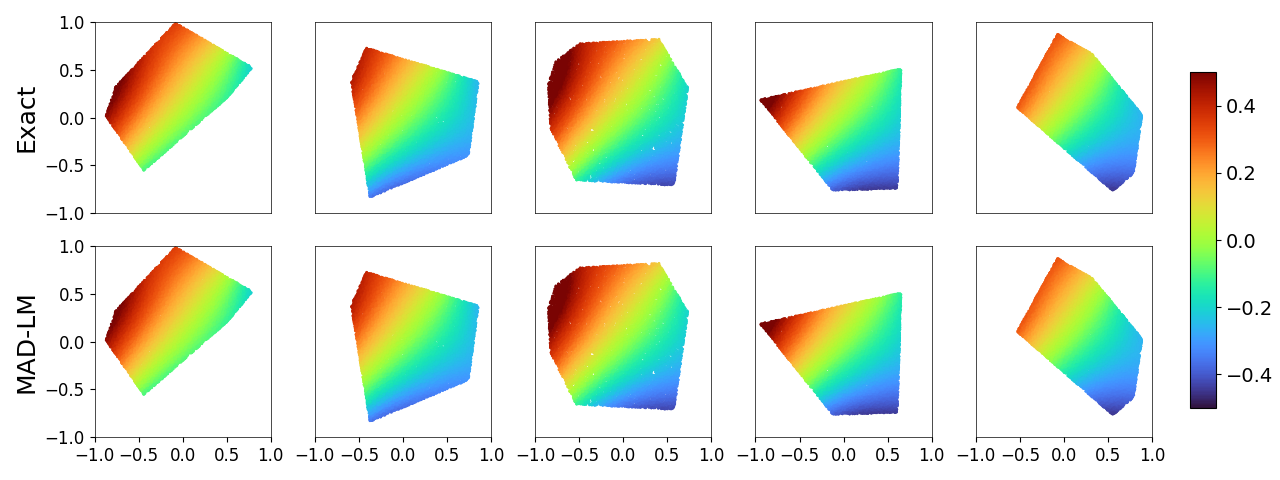}}
\caption{\textbf{Laplace's equation:} Analytical solutions and model predictions of \textit{MAD-LM} when the computational domain $\Omega$ is a polygon with different shapes.}
\label{fig:laplace_polygon_result}
\end{center}
\end{figure}

Fig.\ref{fig:laplace_polygon_a} compares the convergence curves of mean $L_2\ error$ corresponding to different methods, 
and a zoomed-in version of the first 1000 iterations is given in Fig.\ref{fig:laplace_polygon_b}. 
Compared to other methods, \textit{MAD-L} and \textit{MAD-LM} can achieve faster adaptation, i.e. very low $L_2\ error$ in less than $100$ iterations.
Fig.\ref{fig:laplace_polygon_result} compares the predictions of \textit{MAD-LM} with the analytical solutions under 5 randomly selected samples in $S_2$.

\bmhead{Extrapolation}
\begin{figure}
	\centering
	\includegraphics[width=0.7\columnwidth]{./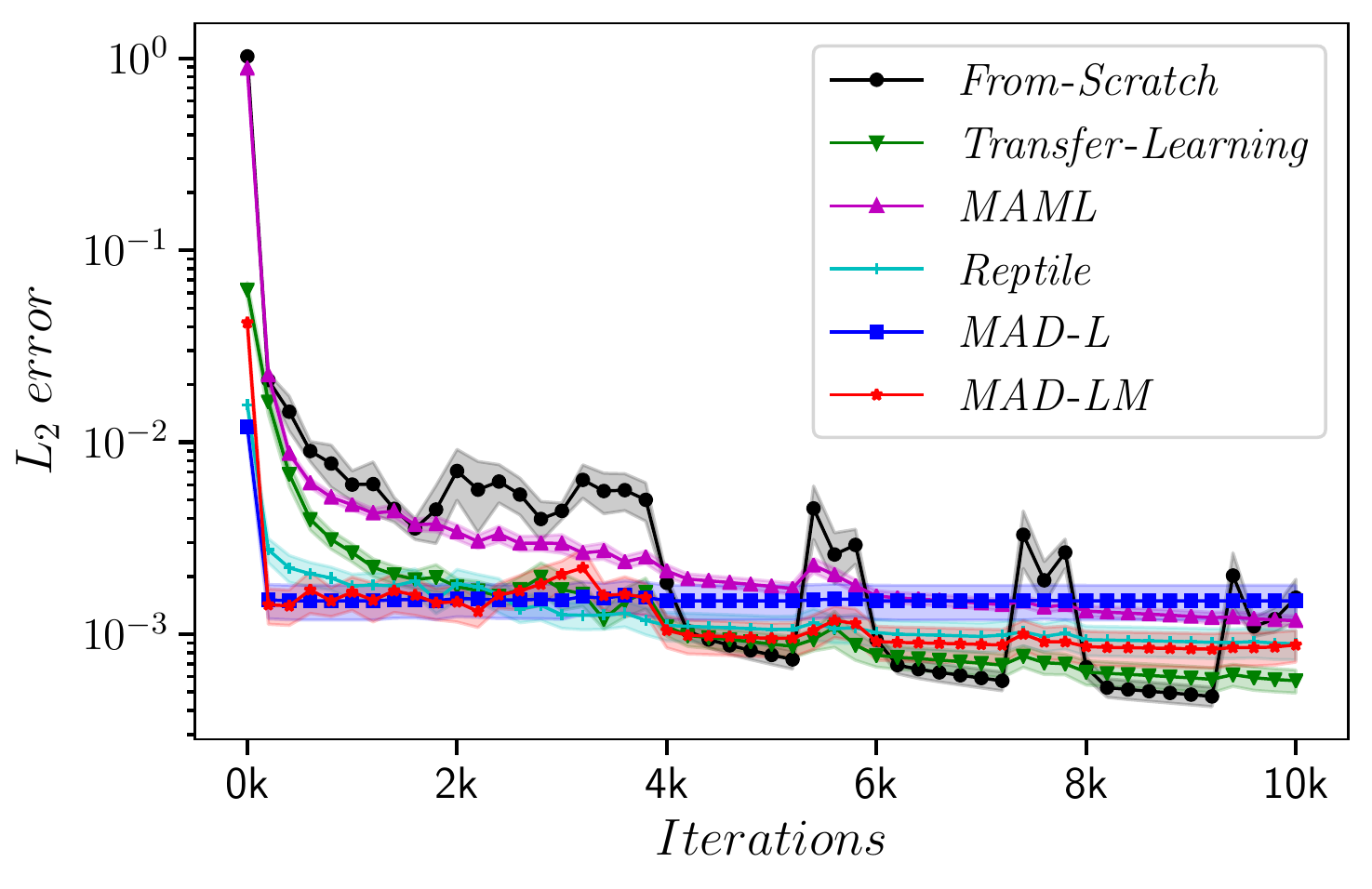}
	\caption{\textbf{Laplace's equation:} The convergence of mean $L_2\ error$ with respect to the number of training iterations for extrapolation experiments.}
	\label{fig:laplace_ellipse_a}
\end{figure}
\begin{figure}
	\centering
	\includegraphics[width=0.7\columnwidth]{./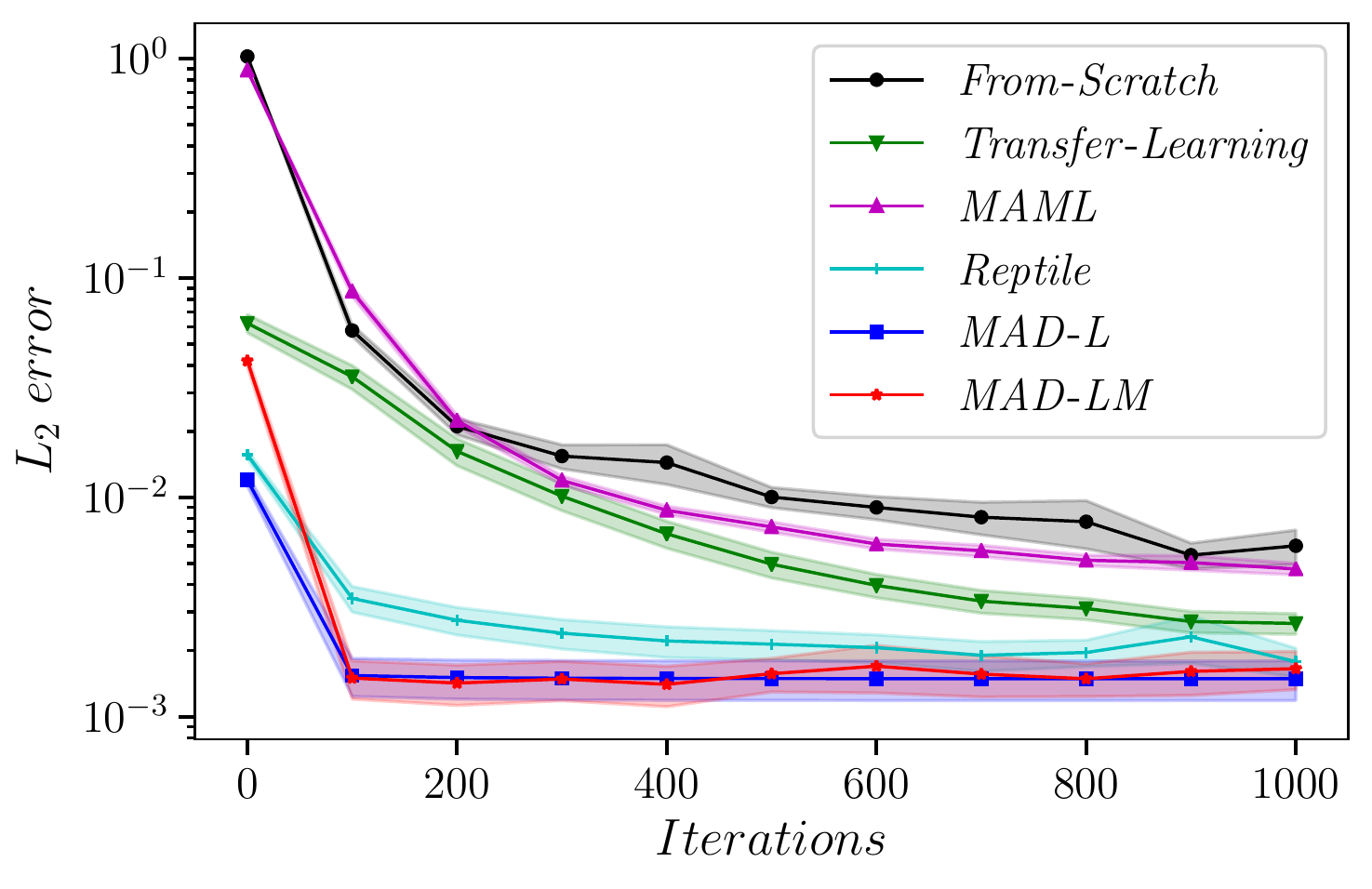}
	\caption{\textbf{Laplace's equation:} The convergence of mean $L_2\ error$ with respect to the number of training iterations for extrapolation experiments, zoomed-in to the first 1000 iterations.}
	\label{fig:laplace_ellipse_b}
\end{figure}

\begin{figure}
\begin{center}
\centerline{\includegraphics[width=\columnwidth]{./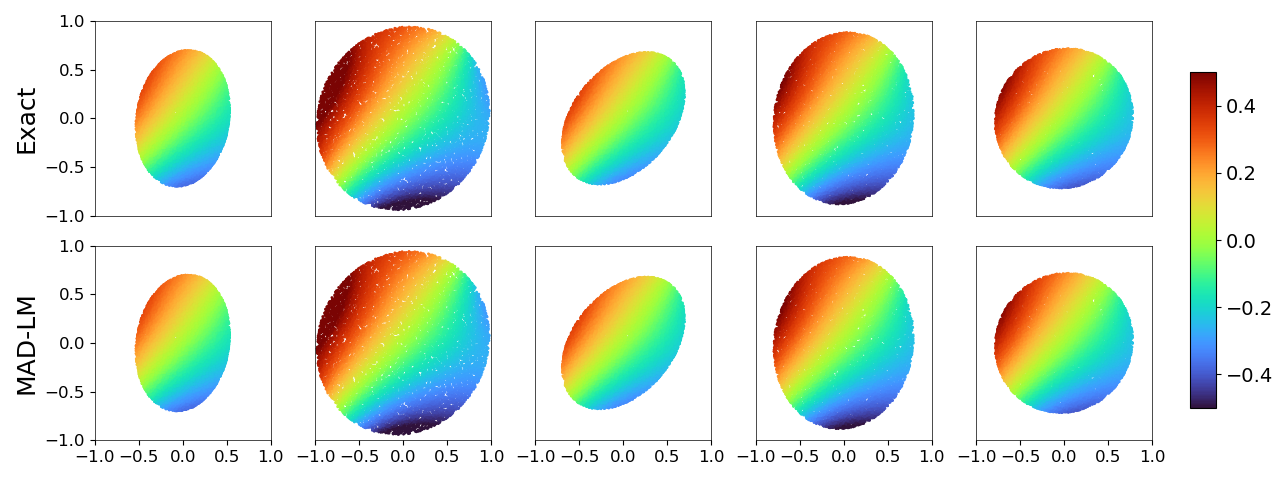}}
\caption{\textbf{Laplace's equation:} Analytical solutions and model predictions of \textit{MAD-LM} for extrapolation experiments.}
\label{fig:laplace_ellipse_result}
\end{center}
\end{figure}

We also conducted an extrapolation experiment for Laplace's Equation.
Specifically, in the pre-training stage, the shape of the computational domain $\Omega$ in $S_1$ is a convex polygon as before. 
However, in the fine-tuning stage, the shape of the computational domain $\Omega$ in $S_2$ is an ellipse arbitrarily taken from the interior of the unit circle.
In this experiment, $|S_1|=100$ and $|S_2|=20$.
Fig.\ref{fig:laplace_ellipse_b} shows that even in the case of extrapolation, \textit{MAD-LM} can achieve faster adaptation compared to other methods.
A comparison of the predictions of \textit{MAD-LM} with the analytical solutions under 5 randomly selected samples in $S_2$ is given in Fig.\ref{fig:laplace_ellipse_result}, which demonstrates the high accuracy of the solutions obtained by \textit{MAD-LM}.

\subsection{Helmholtz's Equation}\label{sec:helmholtz}
Consider the 2-D heterogeneous Helmholtz's equation with a perfectly matched layer (PML):
\begin{equation}\label{def:laplace}
\begin{aligned}
	\frac{1}{\epsilon_x}\frac{\partial }{\partial x}\left(\frac{1}{\epsilon_x}\frac{\partial u}{\partial x}\right)
	+\frac{1}{\epsilon_y}\frac{\partial }{\partial y}\left(\frac{1}{\epsilon_y}\frac{\partial u}{\partial y}\right)
	+\left(\frac{\omega}{c}\right)^2u
	=\rho,
	\quad (x,y) \in [0,4]^2,
\end{aligned}
\end{equation}
where $\omega=4\pi$ is the angular frequency of the source, $c(x,y)$ is the speed of sound, $\rho(x,y)$ is the source distribution, and $u(x,y)$ is the \emph{complex} acoustic wavefield to be solved. 
The source term $\rho(x,y)$ is set to be Gaussian in the form
\begin{equation}
	\rho(x,y)=\exp\left(-\frac{(x-x_0)^2+(y-y_0)^2}{2\tau^2}\right)
,\end{equation}
where $(x_0,y_0)=(0.78,2)$ is the location of the center of the source, and $\tau=0.625$ is the standard deviation. 
The PML for wave absorption has a thickness of $L=0.5$, with coefficients
\begin{equation}
	\epsilon_x(x,y)=1+\frac{\mathrm{i}\sigma_{\max}}{\omega L^2}\bigl(L-\min(x,4-x,L)\bigr)^2
\end{equation}
and $\epsilon_y$ defined in the same manner as $\epsilon_x$, where $\mathrm{i}$ is the imaginary unit, and we set $\sigma_{\max}=24$. 

The variable parameter of the PDE is taken to be $\eta=c(x,y)$, the distribution of the sound speed inside the computational domain. 
Following~\cite{Stanziola2021HelmholtzES} which shows interest in transcranial ultrasound therapy, we generate $c(x,y)$ that resembles a human skull. 
Specifically, a uniform grid of size $256\times 256$ is used to represent the sound speed distribution in the computational domain, with a spatial step-size $h=\frac{4}{256}$. 
A closed curve parameterized by $s\in[0,2\pi]$ is generated by summing up several circular harmonics of random amplitude and phase, in the form
\begin{equation}
\begin{aligned}
	x_\text{curve}(s)&=2.11+\sum_{k=1}^{4}a_k^x\sin(ks+\phi_k^x),
	\\y_\text{curve}(s)&=2+\sum_{k=1}^{4}a_k^y\cos(ks+\phi_k^y),
\end{aligned}
\end{equation}
where $a_1^x,a_1^y\sim U(40h,60h)$, $a_k^x,a_k^y\sim U(-A_kh,A_kh)$ for $k=2,3,4$ with $(A_2,A_3,A_4)=(5,2,1)$, and $\phi_k^x,\phi_k^y\sim\mathcal{N}\left(0,(\frac{\pi}{16})^2\right)$ for $k=1,2,3,4$. 
Expanding this curve by a random thickness $d_\text{ring}\sim U(4h,16h)$ gives rise to a ring with a hollow structure. 
We choose the sound speed of the ring according to $c_\text{ring}\sim U(1.5,2)$, while the background speed of sound is $c_\text{bg}=1$. 
The final sound speed distribution $c(x,y)$ is then obtained by applying a Gaussian filter with standard deviation $4h$ to avoid the jump discontinuity of sound speed on the boundaries of the ring. 
See Fig.\ref{fig:helmholtz_sos_eg} for several examples. 
\begin{figure}
	\centering
	\includegraphics[width=0.7\linewidth]{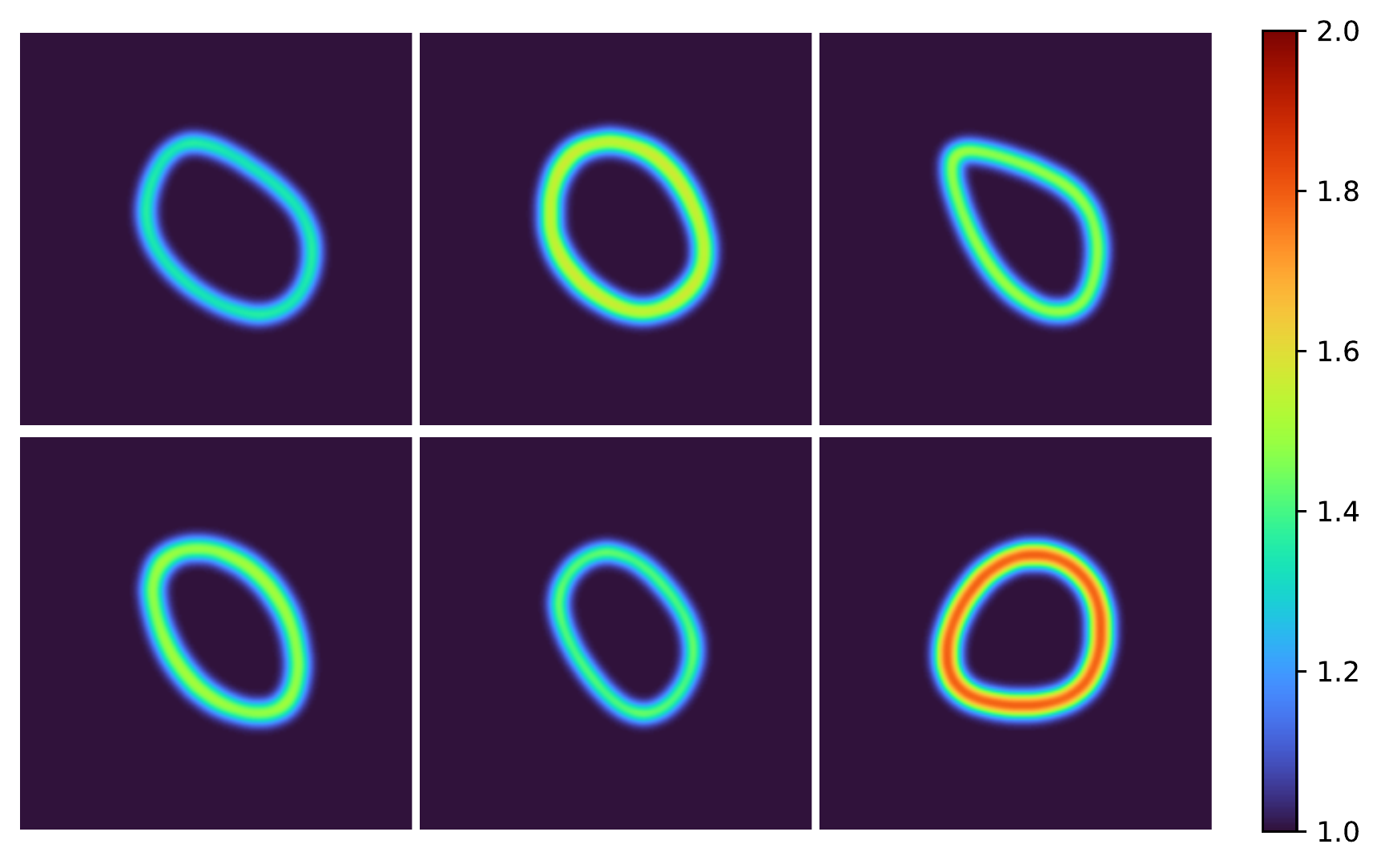}
	\caption{\textbf{Helmholtz's equation:} Examples of the sound speed distributions in Helmholtz's equation. The first five cases are taken from $S_1$, and the last one is taken from $S_2$. }
	\label{fig:helmholtz_sos_eg}
\end{figure}

A total of $20$ such sound speed distributions are randomly generated to form $S_1$, and $5$ to form $S_2$. 
We obtain the reference solutions based on the code released by~\cite{Stanziola2021HelmholtzES}.
The corresponding time-domain equation with a continuous wave sinusoidal source term is solved using the open-source k-Wave acoustics toolbox~\cite{Treeby2010kWaveMT}. 
After running to the steady state, the complex wavefield extracted by temporal Fourier transform is taken as the reference solution. 
We normalize both the reference solutions and the predicted solutions to amplitude 1 and phase 0 at the center of the source location, i.e., $u(x_0,y_0)=1+0\mathrm{i}$, and exclude the PML region when calculating the $L_2\ error$ as it is not of physical interest. 
The MAD method is pre-trained for $100$k iterations, and all methods taken into comparison involve $50$k iterations in the fine-tuning stage. 

\begin{figure}
	\centering
	\includegraphics[width=0.7\linewidth]{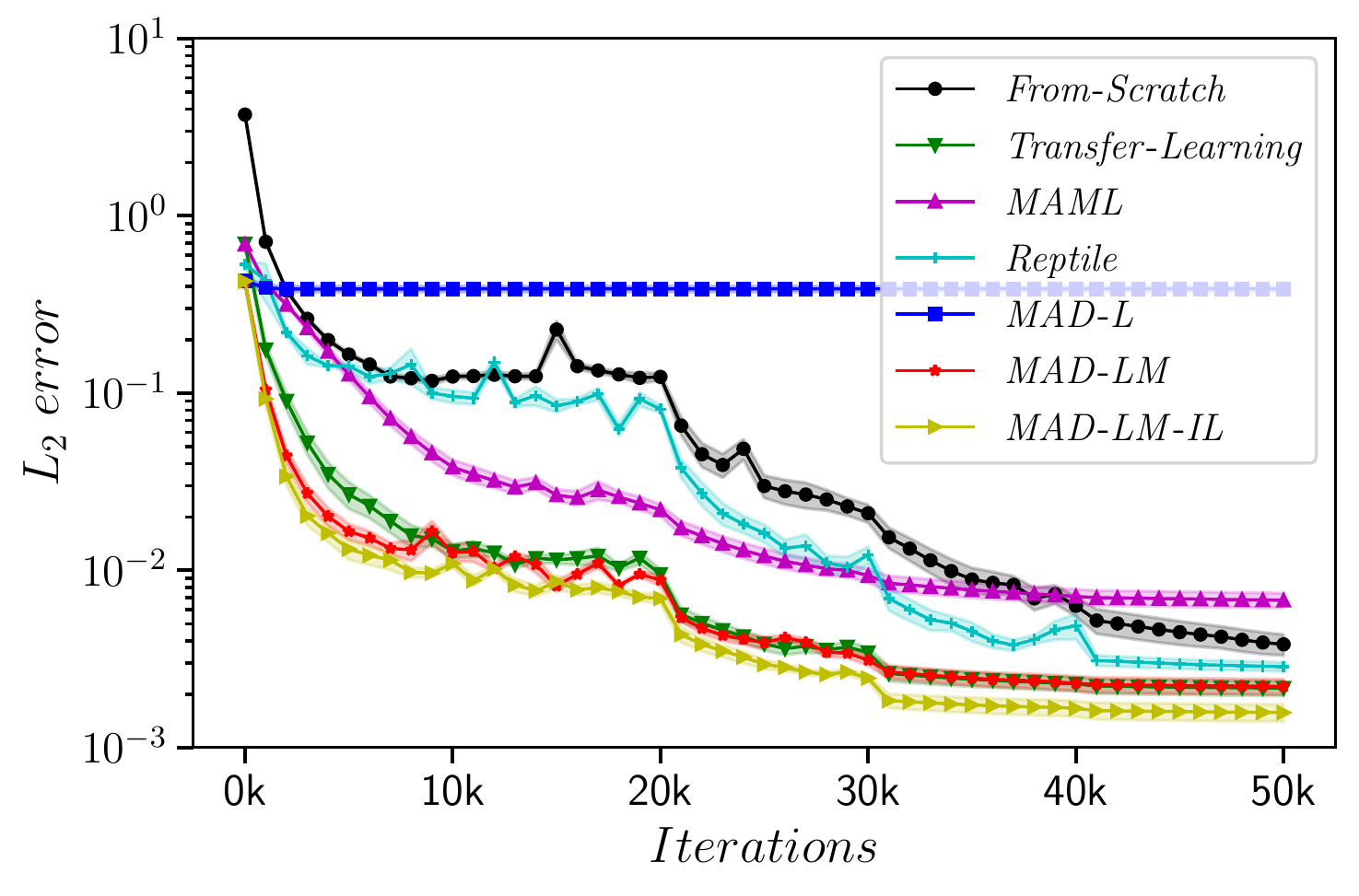}
	\caption{\textbf{Helmholtz's equation:} The convergence of mean $L_2\ error$ with respect to the number of training iterations.
		\textit{MAD-LM-IL} differs from \textit{MAD-LM} only in that it utilizes the network architecture shown in Fig.\ref{fig:helmholtz_NNarch2}. 
	}
	\label{fig:helmholtz_compare_a}
\end{figure}
\begin{figure}
	\centering
	\includegraphics[width=0.9\linewidth]{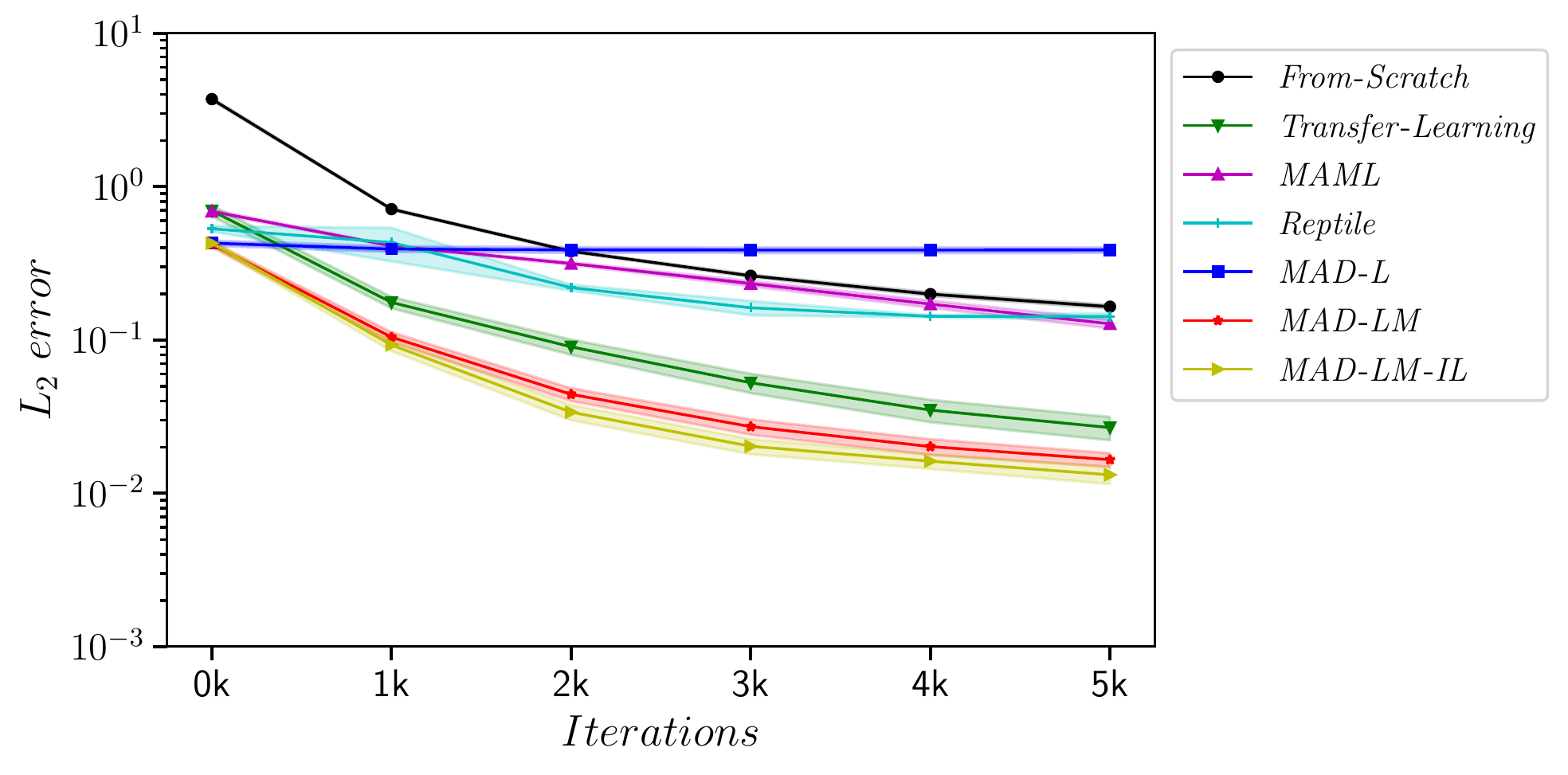}
	\caption{\textbf{Helmholtz's equation:} The convergence of mean $L_2\ error$ with respect to the number of training iterations, zoomed-in to the first 5000 iterations.}
	\label{fig:helmholtz_compare_b}
\end{figure}
\begin{figure}
	\centering
	\includegraphics[width=0.8\linewidth]{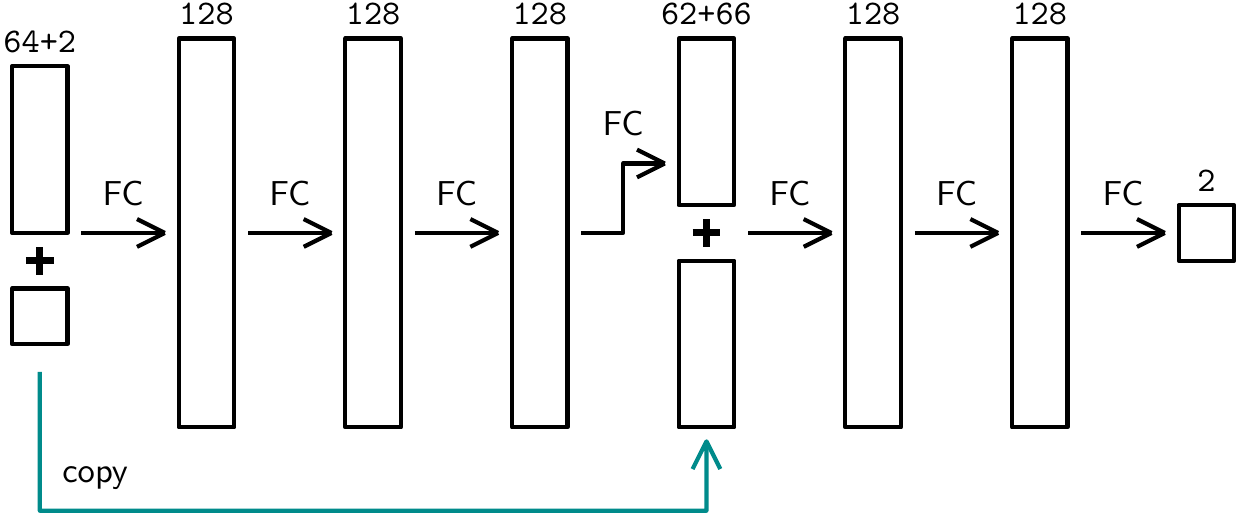}
	\caption{\textbf{Helmholtz's equation:} The alternative network architecture for the MAD method, where boxes represent vectors, arrows represent operations, and the plus signs represent vector concatenation. 
		The dimensions of the vectors are marked on the top of the corresponding boxes. 
		Compared with the original simple network with 7 fully-connected (FC) layers, 
		a copy of the latent vector is now inserted to the fourth hidden layer, 
		and the number of outputs of the fourth fully-connected layer shrinks from $128$ to $62$. 
	}%
	\label{fig:helmholtz_NNarch2}
\end{figure}
\begin{figure}
	\centering
	\includegraphics[width=0.8\linewidth]{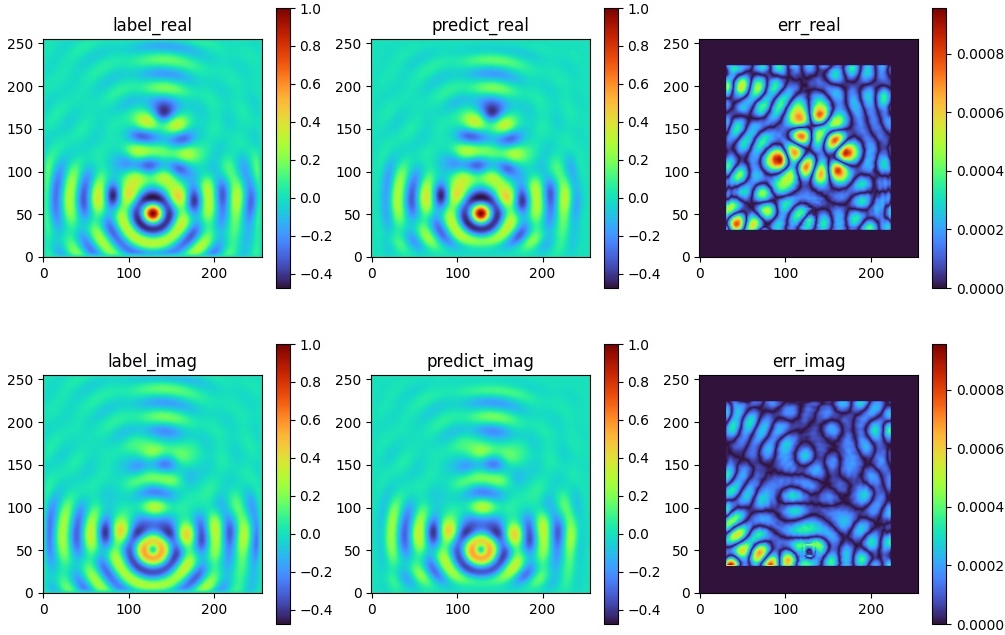}
	\caption{\textbf{Helmholtz's equation:} Solutions predicted by \textit{MAD-LM} (with network architecture shown in Fig.\ref{fig:helmholtz_NNarch2}) compared with the reference solutions, including the real part (top row) and the imaginary part (bottom row). 
		\textbf{Left:} The reference solutions generated by k-Wave.
		\textbf{Middle:} The predicted solutions given by the learned model.
		\textbf{Right:} The absolute error between model predictions and the reference solutions, with the PML region exluded.
		The corresponding sound speed distribution $c(x,y)$ is shown in the bottom-right of Fig.\ref{fig:helmholtz_sos_eg}. 
	}
	\label{fig:helmholtz_MAD_lm_il}
\end{figure}

Fig.\ref{fig:helmholtz_compare_a} shows the mean $L_2\ error$ of all methods as the number of training iterations increases in the fine-tuning stage, 
and Fig.\ref{fig:helmholtz_compare_b} zooms-in to the first $5$k iterations. 
Similar to what is observed in Burgers' equation, \textit{MAD-L} fails to provide an accurate solution, possibly due to the $c$-gap introduced in Sec.\ref{sec:MAD_LM} as before. 
However, \textit{MAD-LM} provides a solution with competitive accuracy compared to the other methods, and achieves the highest convergence speed in the early stage of the fine-tuning process. 
Although \textit{MAML} and \textit{Reptile} show superiority over \textit{From-Scratch} by utilizing the knowledge obtained in the pre-training (or meta-training) stage, 
they both appear to be less effective than the simpler \textit{Transfer-Learning} method. 
We suspect that this is caused by the global oscillatory pattern of the solutions to Helmholtz's equation. 
In order to obtain the solution quickly when given a new PDE parameter, 
such an oscillatory pattern should be fully captured during the pre-training stage. 
Attempting to find a network initialization that is equally good for a set of tasks, both \textit{MAML} and \textit{Reptile} probably fit an average of the solutions in certain sense, in which the global oscillatory pattern is canceled out by each other. 
On the contrary, MAD distinguishes different tasks $\eta_i$ by assigning to them different latent vectors $\pmb{z}_i$ in the pre-training stage, and thereby accumulates knowledge from different PDE parameters successfully without being restricted to an averaged wavefield. 

\bmhead{Effect of a different network architecture}
A simple neural network architecture is used for MAD throughout all the experiments above, where the latent vector $z$ is introduced solely as the extended part of the input layer. 
Inspired by~\cite{park2019deepsdf}, here we consider an alternative architecture as shown in Fig.\ref{fig:helmholtz_NNarch2}, which inserts the latent vector again to one of the middle layers. 
Note that the total number of network parameters decreases compared with the original network, since the output of a middle layer shrinks. 
For distinguishing purposes, results with this alternative network architecture are 
postfixed by ``\textit{IL}'' (standing for ``inserting latent'') in Fig.\ref{fig:helmholtz_compare_a} and~\ref{fig:helmholtz_compare_b}. 
\textit{MAD-L} performs as before with the new architecture, so the corresponding results are not included in the figures. 
However, \textit{MAD-LM} is improved in both precision and convergence speed, and we show the predicted wavefields in Fig.\ref{fig:helmholtz_MAD_lm_il} in comparison with the reference solutions. 
Further improvements of the MAD method could be potentially introduced by utilizing more advanced network architectures~\cite{Mehta2021ModulationPA,Chan2021PiGAN,dupont2022DataFY}. 

\subsection{Summary of Experimental Results}
Achieving fast adaptation is the major focus of this paper, and solutions within a reasonable precision need to be found. 
Indeed, in many control and inverse problems, a higher precision in solving the forward problem (such as parametric PDEs) does not always lead to better results. 
For example, a solution with about $5\%$ relative error is already enough for Maxwell's equations in certain engineering scenarios. 
We are therefore interested in reducing the cost of solving the PDE with a new set of parameters by using only a relatively small number of iterations, in order to obtain an accurate enough solution in practice. 
The advantages of MAD (especially \textit{MAD-LM}) is directly validated in the numerical experiments, as it achieves very fast convergence in the early stage of the training process. 
Some other applications may focus more on the final precision, and is not as sensitive to the training cost. 
In this alternate criterion, the superiority of the MAD method becomes less obvious in our test cases except Maxwell's equations, but its performance is still comparable to other methods. 

\section{Conclusions}\label{sec:conclusion}
\verB{
The proposed method may seem similar to PI-DeepONet~\cite{wang2021learning} at first glance, and we would like to make a brief comment here. 
Both PI-DeepONet and MAD utilize the physics-informed loss Eq.\eqref{eq:MCPIloss} during the offline stage, and incorporate the knowledge obtained to accelerate the online solving process. 
However, the designing logic between them are quite different.  
PI-DeepONet aims to directly learn the solution mapping $G:\mathcal{A}\to\mathcal{U}$, and evaluate $u^\eta=G(\eta)$ via one step of direct network inference. 
MAD seeks to approximate the solution set $\mathcal{K}=G(\mathcal{A})$ (i.e., the image set of the mapping $G:\mathcal{A}\to\mathcal{U}$), and then recover $u^\eta$ by solving a simpler optimization problem in the online stage. 
Compared with finding the exact mapping $G$, approximating the image set $G(\mathcal{A})$ could be a more flexible problem, and hence might be solved up to higher accuracy. 
This might be the intuition behind the higher accuracy of MAD for Burgers' equation, as is shown in Table~\ref{tb:ad_vs_pi_deeponet}. 
Furthermore, PI-DeepONet takes the PDE parameter $\eta$ as network input, bringing inconvenience when $\eta$ is heterogeneous. 
MAD, however, incorporates the information of $\eta$ via the definition of the loss function Eq.\eqref{eq:MCPIloss}, which provides more flexibility in network implementation. 


In sum, this paper proposes a novel reduced order modeling method MAD for solving parametric PDEs. 
}
Based on the idea of meta-learning, the reduced order model obtained is mesh-free in its nature, and the nonlinear trial manifold is constructed in an unsupervised way without requiring any precomputed solution snapshots. 
The decoder width is introduced to quantify the best possible performance of such a trial manifold, serving as a theoretical tool to analyze its effectiveness. 
Fast adaptation for a new set of PDE parameter is attained by searching on this trial manifold, and optionally fine-tuning the shape of the trial manifold simultaneously. 
The advantages of the MAD method is verified by extensive numerical experiments.

\section*{Statements and Declarations}
\bmhead{Competing interests} This work was supported by National Key R\&D Program of China under Grant No. 2021ZD0110400.
Xiang Huang performed this work during an internship at Huawei Technologies Co. Ltd.
The authors have no relevant non-financial interests to disclose.


\begin{appendices}
\section{Nomenclature}
\begin{table}[h]
	\caption{Summary of the commonly used notations in this work. }
	\begin{tabular}{@{}ll@{}}
		\toprule
		Notation & Description\\
		\midrule
		$\Omega\subset\R^d$ & computational domain of the PDEs\\
		$\widetilde{\pmb{x}}\in\Omega$ & the spatiotemporal variable\\
		$\mathcal{U}=\mathcal{U}(\Omega;\R^{d_u})$ & Banach space, with elements of the form $u:\Omega\to\R^{d_u}$\\
		$u:\Omega\to\R^{d_u}$ & solution to the (parametric) PDEs\\
		$\mathcal{A}$ & the space of the PDE parameters\\
		$\eta$ & variable parameter of the PDEs\\
		$\mathcal{L}^{\gamma_1}$ & partial differential operator of the PDEs, with parameter $\gamma_1$\\
		$\mathcal{B}^{\gamma_2}$ & boundary operator of the PDEs, with parameter $\gamma_2$\\
		$G:\mathcal{A}\to\mathcal{U}$ & the solution operator of the parametric PDEs\\
		$Z=\R^n$ & the latent space\\
		$Z_B$ & the closed unit ball of $Z$\\
		$\pmb{z}\in Z$ & latent vector\\
		$d_{n,l}^\text{Deco}(\cdot)$ & the decoder width Eq.\eqref{eq:decWidth}\\
		$D:Z\to\mathcal{U}$ & the decoder mapping\\
		$u_\theta(\cdot,\pmb{z})$ & neural network ansatz of the PDE solution, with latent vector $\pmb{z}$\\
		$L^\eta[\cdot]$ & the physics-informed loss Eq.\eqref{eq:PIloss}\\
		$\hat L^\eta[\cdot]$ & Monte Carlo estimation of $L^\eta[\cdot]$ Eq.\eqref{eq:MCPIloss}\\
		$\theta^*$ & the optimal model weight after pre-training Eq.\eqref{eq:MADtr}\\
		$\pmb{z}_i^*$ & the optimal latent vector for $\eta_i$ after pre-training Eq.\eqref{eq:MADtr}\\
		$\sigma$ & regularization coefficient\\
		\botrule
	\end{tabular}
\end{table}

\end{appendices}

\bibliography{references}


\end{document}